\documentclass{article}
	\usepackage{textcomp,multicol,enumerate,amsmath,amssymb,latexsym,makeidx}
	\usepackage{amsthm}
	\usepackage{yhmath}
	\usepackage{amscd}
	\usepackage{amstext}
	\usepackage{amsxtra}
	\usepackage{algorithm}
	\usepackage{algpseudocode}
	\usepackage{mathastext}
	\usepackage[english]{babel} 
	\usepackage{xcolor,graphicx,subcaption}
	\makeatletter
\renewenvironment{proof}[1][\proofname]{\par
  \pushQED{\qed}%
  \normalfont 
  \topsep6\p@\@plus6\p@\relax
  \trivlist
  \item[\hskip\labelsep\bfseries #1.] 
}{%
  \popQED\endtrivlist\@endpefalse
}
\makeatother
	\usepackage{lineno}
	\usepackage[
  colorlinks,
  linkcolor=blue,
  citecolor=blue,
  urlcolor=blue,
]{hyperref}
	\usepackage{authblk}
	\usepackage{enumitem}
  \newtheorem{Th}{Theorem}[section]
  \newtheorem{lem}{Lemma}[section]
  \newtheorem{rem}{Remark}[section]
	\usepackage[msc-links]{amsrefs}
	\textwidth=17 cm
	\textheight=23 cm
	\topmargin= -1 cm
	\oddsidemargin=0 cm
	\evensidemargin=1 cm
	\parindent=0.6 cm
	\parskip=1.5 mm
	\pagenumbering{arabic}
	\title{\textbf{A Source Identification Problem for the Bi-Parabolic Equation Containing a Poly-harmonic Operator}}
\author[1]{Dang Duc Trong \thanks{Email: ddtrong@hcmus.edu.vn}}
\author[2]{Bui Thanh Duy \thanks{Email: duy.buithanh@uah.edu.vn}}
\author[3]{Nguyen Dang Minh \thanks{Corresponding author, Email: minh.nd@ou.edu.vn}}
\affil[1]{Faculty of Mathematics and Computer Science, University of Science, Ho Chi Minh City, Vietnam}
\affil[1]{Vietnam National University, Ho Chi Minh City, Vietnam (VNU-HCM)}
\affil[2]{Faculty of Fundamental Science, Ho Chi Minh City University of Architecture, Ho Chi Minh City, Viet Nam}
\affil[3]{Ho Chi Minh City Open University, Ho Chi Minh City, Viet Nam}

	\allowdisplaybreaks
	\begin{document}
	\everymath{\displaystyle}
	\maketitle
	\begin{abstract} 
	In this paper, we address the source identification problem for the bi-parabolic equation involving a  operator. Specifically, we investigate the equation $(\partial_t +\frak A)^2u(t)=\psi(t)f$, where $\frak A$ denotes a poly-harmonic operator. Given the perturbed data of $\psi$ and $u(T)$ (where $T>0$), our objective is to determine $f$. Although several scientific publications have explored regularization techniques for bi-parabolic problems, the existing literature remains limited. By relaxing certain conditions on the function $\psi$ and employing a truncation regularization method while considering the problem on an unbounded domain, we believe our results provide valuable insights.
	\end{abstract}
	\textbf{2020 Mathematics Subject Classifications: 35K10; 47A52; 65T50; 35K05; 35R30}  \\[0.2cm]
	\textbf{Key words: Ill-posed problem; Bi-parabolic equation; Truncation; Error estimate} 
	\section{Introduction }
	We consider the problem of finding a source function $f$ when the function $u$ satisfying the following problem
\begin{eqnarray}
	\label{P1}
	&&u_{tt}( \mathbf{x},t)+2\frak Au_t( \mathbf{x},t)+\frak A^2u( \mathbf{x},t)= \psi(t)f(\mathbf{x}),\quad 0<t<T, \quad \mathbf{x} \in \mathbb{R}^d, \\
	\label{in-final con}
	&& u(\mathbf{x},0)=u_t(\mathbf{x},0)=0,u( \mathbf{x},T)=h(\mathbf{x}), \quad \mathbf{x} \in \mathbb{R}^d.
\end{eqnarray}
  Here, the operator $\frak A$ is defined on a dense subset $\frak D(\frak A)$ of $L^2(\mathbb{R}^d)$ and 
  $h,\psi$ are measurable data.
  In the problem, $(h, \psi)$ is in  $L^2(\mathbb{R}^d) \times L^{1}(0,T) $.
 Because the functions $h$ and $\psi$ are often unavailable and collected through measurement, the data we receive are often noisy 
 ($\widetilde h$ and $\widetilde \psi$) with the condition that  
    \begin{equation}\label{data}
    \|h-\widetilde h\|_{L^2(\mathbb{R}^d)}+\|\psi-\widetilde \psi\|_{L^{1}(0,T)}\leqslant \varepsilon,
    \end{equation}
 in which $\varepsilon>0$ is the magnitude of error. 
  
  The equation \eqref{P1} is rewritten by \[\frak L^2(u)=f\cdot\psi(t), \quad \text{where} \quad \frak Lu= (\partial_t + \frak A)u\] and the operator $\frak L$ contains a poly-harmonic operator $\frak A$, \[\frak Au:=(-\Delta)^\sigma u \quad (0<\sigma\leqslant 1).\]

	 As known, the equation $\frak Lu=0$ is well-known and the set of all $p$-th integrable solutions of the equation $\frak Lu = 0$ on the upper half space is called the $\sigma$-parabolic Bergman space (see \cite{nishio2005alpha}). The iterated parabolic operator formed $\frak L^n$ have been considered in \cite{nishio2021reproducing} and in the paper, we study the equation $\frak L^2(u)=\psi f$ because of its applicability. We call Equation \eqref{P1} by a bi-parabolic equation (see \cite{hishikawa2023remark}). The bi-parabolic equation, which models heat conduction, has numerous applications in thermal processes \cite{bulavatsky2016fractional, fushchich1990new, payne2006proposed, wang2007heat}. It is also employed to describe specific phenomena in process dynamics and filter merging \cite{bulavatskiy2003mathematical, bulavatsky2008generalized, kalantarov2009finite}. In the influential work by Fushchich, Galitsyn, and Polubinskii \cite{fushchich1990new}, the authors proposed that classical quadratic parabolic equations may not fully capture the complexities of heat and mass transfer processes, revealing certain well-known anomalies in this field. To address potential anomalies, mathematicians have developed an alternative parabolic model by replacing the second-order operator with a fourth-order operator. 

	Specifically, for $\frak A=-\Delta$, they have found that substituting the operator $\partial_t + \frak A$ with the new operator $(\partial_t + \frak A)^2$ yields the bi-parabolic equation. When $\sigma =1$, our problem involves a higher-order partial differential equation that is quadratic in both time and space variables, which has garnered significant interest from mathematicians (\cite{ebenbeck2020weak, ghoul2019construction, grunau2020positivity, pata2005strongly, segatti2020fractional, tuan2021initial, yanbing2019global}). Drawing on the work of Greer, Bertozzi, and Sapiro \cite{greer2006fourth}, we note that quadratic PDEs model a variety of natural phenomena, including ice formation, fluid dynamics in the lungs, brain warping, and the design of distinctive curves on surfaces. Along with the development of fractional calculus, in this study, we introduce a problem of determining the inverse source for the fractional Laplace operator in the above bi-parabolic equation.\par

To better understand the problem, we will apply the Fourier transform with respect to the spatial variable $\mathbf{x}$. As you can see in the next section, this will lead us to Equation          
	\begin{equation}
	\label{exact-solution}
	H_\psi(|\mathbf{z}|^{2\sigma})
    \widehat f(\mathbf{z})=\widehat h(\mathbf{z}),
	\end{equation}
    where
	\begin{equation}
	\label{H-function}
	 H_\psi(\nu)=\int_{0}^{T}e^{-(T-s)\nu}\left(T-s\right)\psi(s)ds, \quad \nu \geqslant 0. 
	\end{equation}
    We observe that the term $H_\psi(\nu) \to 0$ tends as $\nu \to \infty$. Hence, from \eqref{exact-solution}, the term causes instability, and thus the problem is ill-posed. Furthermore, although we can find $f$ by the inverse Fourier transform if  {$H_\psi(\nu) \ne 0, \forall \nu \geqslant 0$}, in many cases this term may be zero. \par 
	As is known, there have been several publications
    consider problems of finding {$f\in L^2(\mathbb{R}^d)$}
    such that
\begin{equation}\label{general}
    {\widehat f(\mathbf{z})=\frac{\widehat h(\mathbf{z})}{\mathcal{H}_{\psi}(\mathbf{z})}, \quad \text{where}\quad \mathcal{H}_{\psi}(\mathbf{z})=\int_0^T K(\mathbf{z},s)\psi(s)ds,}
\end{equation} 
  which has similar form as \eqref{exact-solution} and $K(\mathbf{z},s)\geqslant\kappa_0(\mathbf{z})>0$ (see, e.g. \cites{QN, DP} and the references there in). In this case, the mentioned papers is often concentrated on the case {$\psi(s) \ne 0 $ for all $s$}. 

	For example, in \cites{QN, DP}, the investigation was conducted under the assumption that $\psi$, the time dependent component of the source, is bounded, which guarantees a fixed sign property for $\psi$. Specifically, the authors in these papers assumed that {\[0<E_0 \leqslant |\psi(s)| \leqslant E_1, \forall s \in [0,T],\]} and consequently,  {\[E_2\leqslant |\widetilde\psi(s)| \leqslant E_3, \forall s \in [0,T].\]} 
	Under this condition, they proved that 
	{\[|\mathcal{ H}_\psi(\mathbf{z})|\geqslant \frac{1}{TE_0\kappa_0(\mathbf{z})}>0, \forall \mathbf{z} \in \mathbb{R}^d.\]} 
	Therefore, their problem is more amenable to regularization. 

	A study addressing the problem of finding the inverse source for a parabolic equation, under the condition that the function  $\psi(t) >0$, is presented in \cite{cheng2020inverse}. In this work, the author determined the source function from the measurement data $u(x,T_1)$ and  $u(x,T_2)$ with $T_1,T_2 \in (0,T)$ and $x \in \Omega_0 \subset \Omega$. Under the assumptions given in the paper, the function {$\mathcal{H}_\psi(\mathbf{z})$} is also assumed to be nonzero for all $\mathbf{z} \in \mathbb{R}^d.$

	 In the work of S. Mondal and M. T. Nair \cite{MN}, the authors also addressed a problem of determining the source for an equation containing a bi-parabolic operator. However, in their study, the function $\psi$ is fixed, and they aim to find $f$ from $\widetilde h$ such that $\|h-\widetilde h\|_{L^2(\mathbb{R}^d)}\leqslant \varepsilon$. Unlike in \cites{QN, Tuan, DP}, the source term $\psi$ in their problem is allowed to change sign on $[0,T]$. While their assumptions on $\psi$ are more general, they impose additional conditions, particularly on the smoothness of $\psi$, to prove that {$|\mathcal{H}_\psi(\mathbf{z})|>\frac{B}{|\mathbf{z}|^2}$ for all  $\mathbf{z} \ne \mathbf{0}$}. They do not consider the case where $\psi$ is perturbed by $\widetilde\psi$ because it is challenging to verify the constraints on $\widetilde\psi$. Additionally, the set of zeros of {$H_{\widetilde\psi}(\nu)$} is difficult to control. 

	Duc et al.~\cite{Duc} have also discussed this assumption in their research and obtained $\mathcal{H}_\psi(\mathbf{z})>0$ for all $\mathbf{z} \in \mathbb{R}^d$. Their study focused on the time--space 
fractional parabolic equation
	\[\partial_t^\alpha u + A^\beta u = f\cdot h(t), \qquad \alpha \in (0,1),\; \beta > 0,\]
which differs from the equation considered in \eqref{P1}. The estimates of the convergence rate in \cites{MN, Duc} obtained are of order H\"older under the assumption $H^\gamma$ ($\gamma>0$) on $f$. \par
	In summary, most existing assumptions regarding the function $\psi$ are intended to demonstrate that the magnitude of the integral in the denominator of \eqref{general} exceeds a positive constant. However, in many cases, this integral may still be zero, implying that the case {$H_\psi(\mathbf{z})=0$} can occur naturally, a situation not addressed in previous studies. Furthermore, when $\psi$ is perturbed by $\widetilde\psi$, the set of zeros of {$H_{\widetilde\psi}(\nu)$} is also not easy to evaluate, presenting a significant obstacle in the calculation process. 

	Therefore, this paper aims to regularize the problem of identifying the source $f$ in Equation \eqref{P1} subject to the conditions \eqref{in-final con}, by employing the truncation method on $\mathbb{R}^d$. The problem is reformulated in the form of \eqref{exact-solution}. We focus on examining cases where the integral term $H_\psi(\nu)$ vanishes under relaxed assumptions on the function $\psi$. Specifically, when $\|\psi-\widetilde \psi\|_{L^q(0,T)}\leqslant \varepsilon$, the set of zeros of $H_{\widetilde\psi}(\nu)$ complicates the regularization process. Under an $H^\gamma$ assumption on $f$, we provide an asymptotic estimate between the exact solution and approximate solutions in $H^s$ $(s<\gamma)$. To our knowledge, these aspects have been insufficiently investigated in previous research.

	  Now, we begin by briefly outlining the key definitions and notation used in this paper before we get to the main problem.

    For every $\mathbf{x}, \mathbf{z} \in \mathbb{R}^d$, the notation $\mathbf{x} \cdot \mathbf{z}$ denotes the Euclidean inner product in $\mathbb{R}^d$. We denote by $m$ and $m_d$ the Lebesgue measures on $\mathbb{R}$ and $\mathbb{R}^d$, respectively. Throughout the paper, $L^2(\mathbb{R}^d)$ denotes the Hilbert space of square-integrable 
functions on $\mathbb{R}^d$, equipped with the norm $\|\cdot\|_{L^2(\mathbb{R}^d)}$.

   	For any $f \in L^2(\mathbb{R}^d)$, its Fourier transform is defined by
\[
\mathcal{F}(f)(\mathbf{z}) := \widehat{f}(\mathbf{z}) = \frac{1}{\left( \sqrt{2\pi} \right)^{d}}\int_{\mathbb{R}^d} f(\mathbf{x}) e^{-i \mathbf{x} \cdot \mathbf{z}} \, d\mathbf{x}.
\]

For $s \geqslant 0$, we define
\[
H^s(\mathbb{R}^d) := \left\{ \theta \in L^2(\mathbb{R}^d) \;:\;\int_{\mathbb{R}^d} (1 + |\mathbf{z}|^2)^s \left| \widehat{\theta}(\mathbf{z}) \right|^2 \, d\mathbf{z} < \infty \right\},
\]
equipped with the norm
\[
\|\theta\|_{H^s(\mathbb{R}^d)} := \left( \int_{\mathbb{R}^d} (1 + |\mathbf{z}|^2)^s \left| \widehat{\theta}(\mathbf{z}) \right|^2 \, d\mathbf{z} \right)^{1/2}.
\]
As is well known, $H^s(\mathbb{R}^d)$ is a Hilbert space.

    \section{The ill-posed nature of the problem}
	Using the Fourier transform, we get from \eqref{P1} and \eqref{in-final con} the equation 
	\[\widehat u_{tt} (\mathbf{z},t)+2|\mathbf{z}|^{2\sigma}\widehat u_t(\mathbf{z},t)+|\mathbf{z}|^{4\sigma}\widehat u(\mathbf{z},t)=\psi (t)\widehat f({\mathbf{z}}).\]
	 Here, we note in \cites{AV, bucur} that \[\widehat{(-\Delta)^{\frac{\gamma}{2}}u}(\mathbf{z})=|\mathbf{z}|^\gamma \widehat u(\mathbf{z}).\] 
	We deduce from \eqref{in-final con} that $\widehat u(\mathbf{z},0)=\widehat u_t(\mathbf{z},0)=0$ and the solution of the above differential equation with respect to $t$ is given explicitly by 
 $$\widehat u(\mathbf{z},t)=\int_{0}^{t}\widehat f(\mathbf{z})e^{-|\mathbf{z}|^{2\sigma}(t-s)}\left(t-s\right)\psi(s)ds.$$
  At $t=T$, we have 
  $$\widehat u(\mathbf{z},T)=\int_{0}^{T}\widehat f(\mathbf{z})e^{-|\mathbf{z}|^{2\sigma}(T-s)}\left(T-s\right)\psi(s)ds.$$
This is the equation \eqref{exact-solution}. 

For all $f$ in $L^2(\mathbb{R}^d)$, if we define 
\[
\mathbf{K}f := \mathcal{F}^{-1} \big( H_\psi(|\mathbf{z}|^{2\sigma}) \, \widehat{f} \big),
\]
then equation \eqref{exact-solution} can be rewritten in the form
\[
\mathbf{K}f = h.
\]
In the sense of Hadamard, a problem is said to be \emph{ill-posed} if it fails to satisfy at least one of the following three conditions:  
	\begin{itemize}
	\item[(i)] the existence of a solution,  
	\item[(ii)] the uniqueness of that solution, and  
	\item[(iii)] the stability of the solution with respect to perturbations in the data.  
The first two conditions are clear; the third will be explained in more detail below.
	\end{itemize}
	The problem $\mathbf{K}f = h$ is said to be unstable if there exists a sequence 
$\{f_n\} \subset L^2(\mathbb{R}^d)$ such that 
\[
\mathbf{K}f_n \to \mathbf{K}f \quad \text{in } L^2(\mathbb{R}^d)
\]
but 
\[
f_n \not\to f \quad \text{in } L^2(\mathbb{R}^d).
\]
We now investigate whether our problem is ill-posed in the sense of Hadamard.

	This will be clearly demonstrated in the following theorem.   
\begin{Th}\label{ill-posed}
Let $h \in L^2(\mathbb{R}^d)$ and $\psi \in L^1(0,T)$. 

\begin{itemize}
    \item[(a)] Denote
    \[
    \mathcal{Z}_\psi := \left\{ \mathbf{z} \in \mathbb{R}^d : 
    {H_\psi(|\mathbf{z}|^{2\sigma}) = 0} \right\}.
    \]
    Equation \eqref{exact-solution} has a solution in {$L^2(\mathbb{R}^d)$} 
    if and only if 
    \[
    \widehat{h}(\mathbf{z}) = 0 \quad \text{for all } \mathbf{z} \in \mathcal{Z}_\psi
    \]
    and 
    \[
        \int_{\mathbb{R}^d \setminus \mathcal{Z}_\psi} 
        \left| \frac{\widehat{h}(\mathbf{z})}{H_\psi(|\mathbf{z}|^{2\sigma})} \right|^2
        \, d\mathbf{z} < \infty.
    \]

    \item[(b)] Denote
    \[
    N_\psi := \{ \nu \in \mathbb{R} : \psi(\nu) \neq 0 \}.
    \]
    If $m(N_\psi) > 0$, then $m_d(\mathcal{Z}_\psi) = 0$, i.e., 
    $H_\psi(|\mathbf{z}|^{2\sigma}) \neq 0$ for almost every $\mathbf{z} \in \mathbb{R}^d$,
    and equation \eqref{exact-solution} has at most one solution 
    $f \in L^2(\mathbb{R}^d)$.

    \item[(c)] For every $\beta > 0$, define
    \[
    \pi_\psi(\beta) := \left\{ \mathbf{z} \in \mathbb{R}^d :
    |H_\psi(|\mathbf{z}|^{2\sigma})| \leqslant \beta \right\}.
    \]
    If $m(N_\psi) > 0$, then {$m_d(\pi_\psi(\beta)) > 0$} for every $\beta > 0$,
    and the problem is unstable; that is, we can find a sequence 
    $\{f_n\} \subset L^2(\mathbb{R}^d)$ such that 
    \[
    \mathbf{K}f_n \to 0 \quad \text{but} \quad f_n \not\to 0.
    \]
\end{itemize}
\end{Th}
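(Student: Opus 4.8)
The plan is to treat $\mathbf{K}$ as a Fourier multiplier whose symbol is $\nu \mapsto H_\psi(\nu)$ evaluated at $\nu = |\mathbf{z}|^{2\sigma}$, so that Plancherel reduces every assertion to a pointwise (almost everywhere) question about dividing by $H_\psi(|\mathbf{z}|^{2\sigma})$. The three parts are then powered by three distinct facts: Plancherel for (a), the analyticity of the Laplace-type transform $H_\psi$ for (b), and the decay $H_\psi(\nu) \to 0$ as $\nu \to \infty$ for (c).

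For part (a) I would argue both directions by Plancherel. If $f \in L^2(\mathbb{R}^d)$ solves $\mathbf{K}f = h$, then $H_\psi(|\mathbf{z}|^{2\sigma})\widehat{f}(\mathbf{z}) = \widehat{h}(\mathbf{z})$ a.e.; on $\mathcal{Z}_\psi$ the left side vanishes, forcing $\widehat{h} = 0$ there, while on $\mathbb{R}^d \setminus \mathcal{Z}_\psi$ we may divide to get $\widehat{f} = \widehat{h}/H_\psi$, whose square-integrability is exactly $\|f\|_{L^2}^2 < \infty$. Conversely, defining $\widehat{f}$ by this quotient off $\mathcal{Z}_\psi$ and by $0$ on $\mathcal{Z}_\psi$, the two stated conditions guarantee $\widehat{f} \in L^2$ and that $H_\psi(|\mathbf{z}|^{2\sigma})\widehat{f} = \widehat{h}$ a.e., so $f = \mathcal{F}^{-1}(\widehat{f})$ is the required solution.

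Part (b) is the analytic heart of the theorem. After the substitution $\tau = T - s$ I would write $H_\psi(\nu) = \int_0^T e^{-\tau \nu}\, \tau\, \psi(T-\tau)\, d\tau$, a Laplace transform over a bounded interval of the $L^1$ function $g(\tau) := \tau\psi(T-\tau)$. Since the interval is finite, this extends to an entire function of $\nu \in \mathbb{C}$. The condition $m(N_\psi) > 0$ makes $g$ nonzero on a set of positive measure, so by injectivity of the Laplace transform $H_\psi \not\equiv 0$; an entire function that is not identically zero has only isolated zeros, so $\{\nu \geq 0 : H_\psi(\nu) = 0\}$ is a discrete, hence at most countable, set. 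The level set $\mathcal{Z}_\psi$ is then the union, over this countable set of zeros $\nu_k$, of the spheres $\{|\mathbf{z}| = \nu_k^{1/(2\sigma)}\}$, each of $d$-dimensional measure zero, so $m_d(\mathcal{Z}_\psi) = 0$. Uniqueness follows at once: two solutions $f_1, f_2$ satisfy $H_\psi(|\mathbf{z}|^{2\sigma})(\widehat{f_1} - \widehat{f_2}) = 0$, and since $H_\psi(|\mathbf{z}|^{2\sigma}) \neq 0$ a.e. we get $\widehat{f_1} = \widehat{f_2}$, i.e. $f_1 = f_2$ in $L^2$.

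For part (c) I would first record that dominated convergence (with majorant $T|\psi(T-\tau)| \in L^1$) gives $H_\psi(\nu) \to 0$ as $\nu \to \infty$. Hence for each $\beta > 0$ there is $R_\beta$ with $|H_\psi(|\mathbf{z}|^{2\sigma})| \leq \beta$ whenever $|\mathbf{z}| \geq R_\beta$, so $\pi_\psi(\beta)$ contains the exterior of a ball and has (in fact infinite) positive measure. To exhibit instability I would pick, for each $n$, a subset $A_n \subseteq \pi_\psi(1/n)$ with $0 < m_d(A_n) < \infty$ (an annulus at radius $R_{1/n}$ works) and set $\widehat{f_n} = m_d(A_n)^{-1/2}\,\mathbf{1}_{A_n}$. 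Then $\|f_n\|_{L^2} = 1$ while $\|\mathbf{K}f_n\|_{L^2}^2 = m_d(A_n)^{-1}\int_{A_n}|H_\psi(|\mathbf{z}|^{2\sigma})|^2\,d\mathbf{z} \leq 1/n^2 \to 0$, so $\mathbf{K}f_n \to 0$ but $f_n \not\to 0$. The main obstacle is the non-vanishing claim in (b): everything hinges on recognizing $H_\psi$ as the restriction of an entire function and invoking Laplace-transform injectivity to rule out $H_\psi \equiv 0$, after which the passage from countably many zeros in $\nu$ to a measure-zero level set in $\mathbf{z}$ via spheres is routine, and parts (a) and (c) are comparatively mechanical consequences of Plancherel and the decay of $H_\psi$.
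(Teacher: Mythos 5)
Your proof is correct, and while part (a) matches the paper, parts (b) and (c) take routes worth comparing. In (b), where you invoke injectivity of the finite Laplace transform as a known fact, the paper proves exactly that fact from scratch: it expands the kernel in a power series, $H_\psi(\nu)=\sum_{n\geqslant 0}\frac{(-1)^n\nu^n}{n!}\int_0^T (T-s)^{n+1}\psi(s)\,ds$, deduces from $H_\psi\equiv 0$ that all moments $\int_0^T t^{n+1}\psi(T-t)\,dt$ vanish, and concludes $\psi=0$ a.e.\ by Weierstrass density of polynomials in $C[0,T]$. Lerch's theorem is proved in essentially this way, so your version is the same mathematics one level of abstraction up; it is fine provided you state the injectivity with a reference. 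From there the two arguments coincide (entire function, isolated countable zeros, $\mathcal{Z}_\psi$ a countable union of spheres, hence null). In (c) your route is genuinely different and arguably cleaner on both counts. First, your observation that $\pi_\psi(\beta)$ contains the exterior of a ball (since $H_\psi(\nu)\to 0$) gives positive, indeed infinite, measure at once; the paper reaches the same conclusion through a spherical-coordinates change of variables $\nu=\rho^{2\sigma}$, which your containment argument subsumes. Second, for instability the paper uses the weighted indicators $\widehat f_n=\chi_{A_{n,m_n}}/\sqrt{|H_\psi(|\mathbf{z}|^{2\sigma})|\,m_d(A_{n,m_n})}$ on $A_{n,m_n}=\{|H_\psi(|\mathbf{z}|^{2\sigma})|\leqslant 1/n,\ |\mathbf{z}|\leqslant m_n\}$, obtaining $\|f_n\|\to\infty$ while $\|\mathbf{K}f_n\|\to 0$; this requires $1/|H_\psi(|\mathbf{z}|^{2\sigma})|$ to be integrable over $A_{n,m_n}$ for $f_n$ to belong to $L^2$ at all, which is delicate near zeros of $H_\psi$ of order $\geqslant 1$ (one would excise neighborhoods of $\mathcal{Z}_\psi$), a point the paper leaves implicit. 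Your normalized indicators $\widehat f_n=m_d(A_n)^{-1/2}\mathbf{1}_{A_n}$ on an annulus in the exterior region avoid any division by $H_\psi$, keep $\|f_n\|_{L^2}=1$, and still give $\|\mathbf{K}f_n\|_{L^2}\leqslant 1/n$, which is all the instability definition demands. Finally, in (a) you also spell out the ``only if'' direction via Plancherel, which the paper dismisses as straightforward; that is a small but genuine completion of the record.
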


 \begin{proof}\leavevmode
\begin{enumerate}
\item[(a)] Let $f=\mathcal{F}^{-1}(F)$, where
{\[
  F(\mathbf{z}) =
  \begin{cases}
    \dfrac{\widehat h(\mathbf{z})}{H_\psi(|\mathbf{z}|^{2\sigma})}, & \mathbf{z} \notin \mathcal{Z}_\psi,\\[0.8em]
    0, & \mathbf{z} \in \mathcal{Z}_\psi.
  \end{cases}
\]}
It is straightforward to verify that $f$ is a solution of \eqref{exact-solution}.

\item[(b)] We show that $H_\psi(\nu) \not\equiv 0$ for $\nu\in\mathbb{R}$.  
Assume, to the contrary, that $H_\psi(\nu) = 0$ for every $\nu \in \mathbb{R}$. Then
\[
 { H_\psi(\nu) = \sum_{n=0}^\infty \frac{(-1)^n \nu^n}{n!}
  \int_0^T (T-s)^{n+1}\, \psi(s)\, ds,
  \quad \forall \nu \in \mathbb{R}.}
\]
It follows that
\[
 { \int_0^T t^{n+1} \psi(T-t)\, dt = 0, \quad \forall n \in \mathbb{N},}
\]
which implies
\[
  \int_0^T P(t)t \psi(T-t)\, dt = 0
\]
for every polynomial $P$. Since the set of polynomials is dense in $C[0,T]$, we deduce that $\psi = 0$ a.e., contradicting the assumption $m(N_\psi) > 0$. Hence $H_\psi \not\equiv 0$.

Moreover, since $H_\psi$ is analytic, its set of real zeros
\[
  Z_{\psi} = \{\nu \in \mathbb{R}:\, \nu \geqslant 0, \ H_\psi(\nu) = 0\}
\]
consists of isolated and at most countably many points.  
Let $\nu_j \in Z_\psi$, $j \in J \subset \mathbb{N}$, satisfy
$0 \leqslant \nu_1 < \nu_2 < \dots$. Then
\[
  \mathcal{Z}_\psi = \bigcup_{j \in J} \{\mathbf{z} \in \mathbb{R}^d:\, |\mathbf{z}|^{2\sigma} = \nu_j\}.
\]
Therefore $m_d(\mathcal{Z}_\psi) = 0$.

Now, suppose $f_1,f_2 \in L^2(\mathbb{R}^d)$ satisfy
\[
  H_\psi(|\mathbf{z}|^{2\sigma}) \, \widehat f_k(\mathbf{z})
  = \widehat h(\mathbf{z}), \quad k=1,2.
\]
This implies
\[
  H_\psi(|\mathbf{z}|^{2\sigma}) \, \widehat f_1(\mathbf{z})
  = H_\psi(|\mathbf{z}|^{2\sigma}) \, \widehat f_2(\mathbf{z}).
\]
For $\mathbf{z} \notin \mathcal{Z}_\psi$, we obtain
$\widehat f_1(\mathbf{z}) = \widehat f_2(\mathbf{z})$.  
Since $m_d(\mathcal{Z}_\psi) = 0$, it follows that
$\widehat f_1 = \widehat f_2$ a.e., and thus $f_1 = f_2$ a.e.

	\item[(c)] The proof will consist of two steps.

\textbf{Step 1. Prove that $m_d(\pi_\psi(\beta))>0$ for every $\beta>0$.}

If $m(N_\psi)>0$ then the function $H_\psi(\nu)$ is
analytic with respect to $\nu$ and $H_\psi(\nu)\not\equiv 0$. On the other hand, using the Lebesgue dominated convergence theorem yields
\[
\lim_{\nu\to\infty}H_\psi(\nu)
=\lim_{\nu\to\infty}
\int_0^T e^{-(T-s)\nu}(T-s)\psi(s)\,ds = 0.
\]
{Since $\lim_{\nu\to\infty} H_\psi(\nu) = 0$, for any $\beta > 0$ there exists $r > 0$ such that 
\[
|H_\psi(\nu)| \leqslant \beta, \quad \forall \nu \geqslant r.
\]
Therefore
\[
[r, \infty) \subset \{\nu \in \mathbb{R} : |H_\psi(\nu)| \leqslant \beta\},
\]
which implies that the set $\{\nu \in \mathbb{R} : |H_\psi(\nu)| \leqslant \beta\}$ has positive Lebesgue measure. It means
\[
m\big(\{\nu\in\mathbb{R}:~ |H_\psi(\nu)|\leqslant \beta\}\big) > 0.
\]}
{Next, we have
\[
m_d(\pi_\psi(\beta))
= \int_{\{\mathbf{z}\in\mathbb R^d:\ |H_\psi(|\mathbf z|^{2\sigma})|\leqslant\beta\}} d\mathbf z.
\]
Using spherical coordinates $\mathbf z = \rho\mathbf{w}$ with $\rho=|\mathbf z|\geqslant 0$ and $\mathbf{w}=\frac{\mathbf{z}}{|\mathbf{z}|}\in\mathbb S^{d-1}$, where 
\[
\mathbb{S}^{d-1} := \left\{ \mathbf{x} \in \mathbb{R}^d \ \big|\ |\mathbf{x}| = 1 \right\}
\]
is the unit sphere in $\mathbb{R}^d$ and $d\Sigma(\mathbf{w})$ denotes its surface measure, we have
\[
d\mathbf{z} = \rho^{d-1} \, d\rho \, d\Sigma (\mathbf{w}),
\]
so that
\[
\begin{aligned}
m_d(\pi_\psi(\beta))
&= \int_{\{\rho\geqslant0:\ |H_\psi(\rho^{2\sigma})|\leqslant\beta\}}
\left( \int_{\mathbb S^{d-1}} \rho^{\,d-1} \, d\Sigma(\mathbf{w}) \right) d\rho = \omega_d \int_{\{\rho\geqslant 0:\ |H_\psi(\rho^{2\sigma})|\leqslant\beta\}} \rho^{\,d-1}\,d\rho,
\end{aligned}
\]
where $\omega_d := \Sigma(\mathbb{S}^{d-1})=\dfrac{2\pi^{d/2}}{\Gamma(d/2)}$ is the surface area of the unit sphere in $\mathbb{R}^d$.}

{Now change variables $\nu=\rho^{2\sigma}$ (so $\rho=\nu^{1/(2\sigma)}$ and
$d\rho=\frac{1}{2\sigma}\nu^{\frac{1}{2\sigma}-1}\,d\nu$). Then
\[
\rho^{\,d-1}\,d\rho
= \nu^{\frac{d-1}{2\sigma}}\cdot\frac{1}{2\sigma}\nu^{\frac{1}{2\sigma}-1}\,d\nu
= \frac{1}{2\sigma}\nu^{\frac{d-2}{2\sigma}}\,d\nu.
\]
Hence
\[
m_d(\pi_\psi(\beta))
= \frac{\omega_d}{2\sigma}\int_{\{\nu\geqslant 0:\ |H_\psi(\nu)|\leqslant\beta\}}
\nu^{\frac{d-2}{2\sigma}}\,d\nu.
\]
Since the set $\{\nu \geqslant 0 : |H_\psi(\nu)| \leqslant \beta\}$ has positive Lebesgue measure (as argued above), and the function inside the integral, $\nu^{\frac{d-2}{2\sigma}}$, is strictly positive on a set of positive measure, it follows that $m_d(\pi_\psi(\beta))>0$ for every $\beta>0$. }

\textbf{Step 2. Prove that the problem is unstable.}

For $n,m\in\mathbb{N}$, 
we denote
\[
A_{n,m}=\left\{\mathbf{z}\in\mathbb{R}^d:~ |H_\psi(|\mathbf{z}|^{2\sigma})|\leqslant \frac{1}{n},\ |\mathbf{z}|\leqslant m\right\}.
\]
We have 
\[
\bigcup_{m=1}^\infty A_{n,m}=\pi_\psi(1/n), \quad A_{n,m} \subset A_{n,m+1}.
\]
Hence, from the fundamental properties of measure,
\[
\lim_{m\to\infty} m_d(A_{n,m}) = m_d(\pi_\psi(1/n)) > 0
\quad\text{for every } n\in\mathbb{N}.
\]
Therefore, we can find a number $m_n\in\mathbb{N}$ such that
\[
0 < m_d(A_{n,m_n}) < \infty.
\]

We choose $f = h = 0$, and set $\mathbf{K}f_n = h_n$ where
{
\[
\widehat f_n(\mathbf{z})
=\frac{\chi_{A_{n,m_n}}(\mathbf{z})}{
  \sqrt{|H_\psi(|\mathbf{z}|^{2\sigma})|\,m_d(A_{n,m_n})}},
\quad
\widehat h_n(\mathbf{z})
=\frac{\sqrt{|H_\psi(|\mathbf{z}|^{2\sigma})|}}{
\sqrt{m_d(A_{n,m_n})}}
\chi_{A_{n,m_n}}(\mathbf{z}).
\]}
We have 
\[
|\widehat f_n(\mathbf{z})| \geqslant 
\frac{\sqrt{n}}{\sqrt{m_d(A_{n,m_n})}}\,
\chi_{A_{n,m_n}}(\mathbf{z}),
\]
which implies
\[
\|\widehat f_n\|^2_{L^2(\mathbb{R}^d)} \geqslant 
\int_{\mathbb{R}^d}
\left(\frac{\sqrt{n}}{\sqrt{m_d(A_{n,m_n})}}
\chi_{A_{n,m_n}}(\mathbf{z})\right)^2
d\mathbf{z} = n.
\]
On the other hand,
\[
\|\widehat h_n\|^2_{L^2(\mathbb{R}^d)} \leqslant 
\int_{\mathbb{R}^d}
\left(\frac{1}{\sqrt{n\,m_d(A_{n,m_n})}}
\chi_{A_{n,m_n}}(\mathbf{z})\right)^2
d\mathbf{z} = \frac{1}{n}.
\]
It follows that
\[
\lim_{n\to\infty}\|h_n - h\| = 0,
\quad
\lim_{n\to\infty}\|f_n - f\| = \infty.
\]
Hence, our problem is unstable.
\end{enumerate}
 \end{proof} 
\section{Modified truncated regularization} 
Assuming that $f \in L^2(\mathbb{R}^d)$, let us consider Problem \eqref{P1}-\eqref{in-final con}. 
If it has a solution 
\[u \in C\big([0,T];L^2(\mathbb{R}^d)\big) \cap C^2\big(0,T;D({\frak A})\big),\] 
then Theorem \ref{ill-posed} implies that 
$H_\psi(|\mathbf{z}|^{2\sigma}) \not=0$ almost everywhere, and
\begin{equation}\label{1exact-solution}
    \widehat f(\mathbf{z}) = \frac{\widehat h(\mathbf{z})}{H_\psi(|\mathbf{z}|^{2\sigma})} \quad \text{a.e. in } \mathbb{R}^d.
\end{equation}

In \eqref{1exact-solution}, the term $H_\psi(|\mathbf{z}|^{2\sigma})$ is a source of numerical instability, especially when it approaches zero. There are several ways to handle this instability. 
According to regularization theory, one can choose filter functions $g_\alpha(\lambda)$ such that 
$\lim_{\alpha \to 0^+} g_\alpha(\lambda) = 1/\lambda$. Then, the term 
$1/H_\psi(|\mathbf{z}|^{2\sigma})$ can be replaced by $g_\alpha(H_\psi(|\mathbf{z}|^{2\sigma}))$. 

For example, one can take
\[
g_\alpha(\lambda) =
\begin{cases}
    \dfrac{\lambda}{\alpha+\lambda^2}, & \text{Tikhonov filter,}\\[2mm]
    \dfrac{1}{\lambda} \chi_{[\alpha,\infty)}(|\lambda|), & \text{truncated filter.}
\end{cases}
\]

Here, $\chi_\Omega$ denotes the characteristic function of $\Omega$:
\[
\chi_\Omega(\mathbf{z}) = 
\begin{cases}
1, & \mathbf{z} \in \Omega,\\
0, & \mathbf{z} \notin \Omega.
\end{cases}
\]

In this paper, we choose the truncated filter. Its main advantages are simplicity, computational efficiency, and effective noise filtering. It stabilizes the problem by eliminating small singular values that are highly susceptible to noise.

 Using the truncated method with $\lambda=H_\psi(|\mathbf{z}|^{2\sigma})$, we can use the function
 	\[ {f_{\alpha,\psi,h}=
\mathcal{F}^{-1}\left(\frac{\widehat h}{H_\psi(|\mathbf{z}|^{2\sigma})} \chi_{
    \{
    |H_\psi(|\mathbf{z}|^{2\sigma})|>\alpha\}}\right) }\] 
to approximate $f$. Since the functions $h$ and $\psi$ are unavailable, we can only use the substitute functions 
$\widetilde h$ and $\widetilde \psi$ in \eqref{data} respectively. Hence we will approximate $f$ by 
\begin{equation}\label{regularization-solution}
     {f_{\alpha,\widetilde\psi,\widetilde h}=\mathcal{F}^{-1}\left(\frac{\widehat {\widetilde h}}{H_{\widetilde\psi}(|\mathbf{z}|^{2\sigma})} \chi_{
        \{
        |H_{\widetilde\psi}(|\mathbf{z}|^{2\sigma})|>\alpha\}}\right)}.
\end{equation}
For brevity, we will denote 
\begin{equation}\label{omega}
  \omega_\psi(\alpha):=\{\mathbf{z} \in \mathbb{R}^d: |H_\psi(|\mathbf{z}|^{2\sigma})|>\alpha\}.  
 \end{equation}
The set $\omega_\psi(\alpha)$ is  used for analyzing the problem and its evaluation in the rest of the paper.
Since the functions $h$ and $\psi$ are unavailable, we can only use the substitute functions 
$\widetilde h$ and $\widetilde \psi$ in \eqref{data} respectively. Hence we will approximate $f$ by 
$R_{\alpha,\widetilde\Psi}: L^2(\mathbb{R}^d)\to L^2(\mathbb{R}^d)$ satisfying
\begin{equation}\label{regularization-solution}
  {R_{\alpha,\widetilde\Psi}(\widetilde h):=  f_{\alpha,\widetilde\psi,\widetilde h}=\mathcal{F}^{-1}\left(\frac{\widehat {\widetilde h}}{H_{\widetilde\psi}(|\mathbf{z}|^{2\sigma})} \chi_{
        \omega_{\widetilde\psi}(\alpha)}\right)}.
\end{equation}
This method could be tentatively called {\it modified truncated regularization}. Here, we present the continuity of $R_{\alpha,\widetilde\psi}\widetilde h$ with respect to $(\widetilde\psi,\widetilde h)$. Due to the nonlinear dependence of the operator $R_{\alpha,\widetilde\psi}$ on $\widetilde\psi$, establishing the continuity of this modified truncated regularization poses a significant challenge.
\begin{Th}
 Let $\alpha>0$ be fixed and $\vartheta_n,\vartheta\in L^1(\mathbb{R}^d)$, $\vartheta_n,\vartheta\not\equiv 0$ in $L^1(0,T)$.
 
 (a) The operator $R_{\alpha,\vartheta}$ is well-defined, i.e., $ R_{\alpha,\vartheta}h\in L^2(\mathbb{R}^d)$ for every $h\in L^2(\mathbb{R}^d)$.
 
 (b) If $h_n\to h$ in $L^2(\mathbb{R}^d)$ and $ 
 \vartheta_n\to\vartheta$ in $L^1(0,T)$ then
 $$   R_{\alpha,\vartheta_n} h_n=f_{\alpha,\vartheta_n,h_n}
 \stackrel{n\to\infty}{\longrightarrow}
  R_{\alpha,\vartheta}h =f_{\alpha,\vartheta,h}~~ {\rm in}~ L^2(\mathbb{R}^d).$$  
\end{Th}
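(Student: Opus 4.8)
The plan is to treat the two parts separately, using the uniform bound from (a) as the engine that drives the convergence in (b). For part (a), the decisive observation is that on the truncation set $\omega_\vartheta(\alpha)$ the denominator is bounded below: $|H_\vartheta(|\mathbf{z}|^{2\sigma})|>\alpha$. Hence the Fourier multiplier $\chi_{\omega_\vartheta(\alpha)}/H_\vartheta(|\mathbf{z}|^{2\sigma})$ is bounded in modulus by $1/\alpha$, and by Plancherel's theorem one obtains at once $\|R_{\alpha,\vartheta}h\|_{L^2(\mathbb{R}^d)}\leqslant \tfrac{1}{\alpha}\|h\|_{L^2(\mathbb{R}^d)}$, so $R_{\alpha,\vartheta}$ is well-defined and bounded on $L^2(\mathbb{R}^d)$.

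For part (b), I would first reduce everything to the Fourier side via Plancherel and abbreviate $\Phi(\mathbf{z})=H_\vartheta(|\mathbf{z}|^{2\sigma})$, $\Phi_n(\mathbf{z})=H_{\vartheta_n}(|\mathbf{z}|^{2\sigma})$, together with the multipliers $g=\chi_{\{|\Phi|>\alpha\}}/\Phi$ and $g_n=\chi_{\{|\Phi_n|>\alpha\}}/\Phi_n$. The crucial preliminary estimate is the uniform convergence $\Phi_n\to\Phi$: directly from the definition \eqref{H-function} and the bounds $0\leqslant e^{-(T-s)\nu}\leqslant 1$, $0\leqslant T-s\leqslant T$ valid for $\nu\geqslant 0$ and $s\in[0,T]$, one gets $\sup_{\mathbf{z}}|\Phi_n(\mathbf{z})-\Phi(\mathbf{z})|\leqslant T\|\vartheta_n-\vartheta\|_{L^1(0,T)}\to 0$. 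I would then split $\|\widehat{h_n}\,g_n-\widehat{h}\,g\|_{L^2}\leqslant\|(\widehat{h_n}-\widehat{h})g_n\|_{L^2}+\|\widehat{h}(g_n-g)\|_{L^2}$; the first term is at most $\tfrac{1}{\alpha}\|h_n-h\|_{L^2}\to 0$ thanks to the bound $|g_n|\leqslant 1/\alpha$ from part (a).

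The hard part is the second term, precisely because the multiplier depends on $\vartheta_n$ not only through the denominator but also through the \emph{support} $\omega_{\vartheta_n}(\alpha)$, so the characteristic function can jump as $n$ varies. My strategy is to establish pointwise a.e.\ convergence $g_n\to g$ and then invoke dominated convergence with the integrable majorant $(2/\alpha)^2|\widehat{h}|^2$. Fixing $\mathbf{z}$ with $|\Phi(\mathbf{z})|\neq\alpha$: if $|\Phi(\mathbf{z})|>\alpha$, uniform convergence forces $|\Phi_n(\mathbf{z})|>\alpha$ for large $n$ and $g_n(\mathbf{z})=1/\Phi_n(\mathbf{z})\to 1/\Phi(\mathbf{z})=g(\mathbf{z})$; if $|\Phi(\mathbf{z})|<\alpha$, then $g_n(\mathbf{z})=0=g(\mathbf{z})$ eventually. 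Thus $g_n\to g$ everywhere off the boundary set $\{\mathbf{z}:|H_\vartheta(|\mathbf{z}|^{2\sigma})|=\alpha\}$.

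It then remains to show this boundary set is $m_d$-null, which is where I reuse the analyticity machinery from Theorem \ref{ill-posed}. Since $\vartheta\not\equiv 0$, the function $H_\vartheta$ is analytic with $H_\vartheta(\nu)\to 0$ as $\nu\to\infty$, so it is not identically equal to $\pm\alpha$; consequently $\{\nu\geqslant 0:|H_\vartheta(\nu)|=\alpha\}$ consists of the isolated zeros of the analytic functions $H_\vartheta\mp\alpha$, hence is at most countable. Transporting to $\mathbb{R}^d$ exactly as in the proof of Theorem \ref{ill-posed}(b), the boundary set becomes a countable union of spheres $\{|\mathbf{z}|^{2\sigma}=\nu_j\}$, each of $m_d$-measure zero, so the whole set is null. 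Dominated convergence then yields $\|\widehat{h}(g_n-g)\|_{L^2}\to 0$, and combining the two terms gives $R_{\alpha,\vartheta_n}h_n\to R_{\alpha,\vartheta}h$ in $L^2(\mathbb{R}^d)$, as claimed.
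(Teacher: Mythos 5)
Your proposal is correct and follows essentially the same route as the paper's proof: the $1/\alpha$ multiplier bound for well-definedness and for the data term, then dominated convergence for the multiplier term with the majorant $(2/\alpha)^2|\widehat h|^2$, pointwise convergence off the level set $\{\mathbf{z}: |H_\vartheta(|\mathbf{z}|^{2\sigma})|=\alpha\}$, and nullity of that set via analyticity of $H_\vartheta$ and the countable union of spheres, exactly as in the paper's appeal to Theorem \ref{ill-posed}. If anything, you are slightly more explicit than the paper at two points it glosses over --- the uniform bound $\sup_{\mathbf{z}}|H_{\vartheta_n}-H_\vartheta|\leqslant T\|\vartheta_n-\vartheta\|_{L^1(0,T)}$ underlying the pointwise convergence, and the observation that $H_\vartheta(\nu)\to 0$ as $\nu\to\infty$ rules out $H_\vartheta\equiv\pm\alpha$ --- both of which are welcome additions rather than deviations.
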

\begin{proof}
The definition is well-defined since 
$$  \left|\frac{\widehat { h}}{H_{\vartheta}(|\mathbf{z}|^{2\sigma})} \chi_{
    \{
    |H_{\vartheta}(|\mathbf{z}|^{2\sigma})|>\alpha\}}\right|\leqslant \frac{1}{\alpha}|\widehat { h}|\in L^2(\mathbb{R}^d). $$ 
We consider the continuity of $R_{\alpha,\vartheta}h$ 
in  $L^2(\mathbb{R}^d)$. We have
\begin{align*}
   \|R_{\alpha,\vartheta_n}h_n-R_{\alpha,\vartheta}h\|_{L^2(\mathbb{R}^d)}^2 & \leqslant 2(J_1+J_2),
\end{align*}    
where
\begin{align*}
   J_1 &=\int_{\mathbb{R}^d} \left|(\widehat h_n-\widehat h)\frac{\chi_{\omega_{\vartheta_n}(\alpha)}}{|H_{\vartheta_n}(|\mathbf{z}|^{2\sigma})|}\right|^2 d\mathbf{z},\\
   J_2 &= \int_{\mathbb{R}^d} \left|\widehat h\left(\frac{\chi_{\omega_{\vartheta_n}(\alpha)}}{|H_{\vartheta_n}(|\mathbf{z}|^{2\sigma})|}
-\frac{\chi_{\omega_{\vartheta}(\alpha)}}{|H_{\vartheta}(|\mathbf{z}|^{2\sigma})|}
   \right)\right|^2 d\mathbf{z}.
\end{align*}
 We have
 $$  0\leqslant J_1\leqslant \frac{1}{\alpha^2}\|h_n-h\|_{L^2(\mathbb{R}^r)}^2
 \stackrel{n\to\infty}{\longrightarrow} 0.  $$
 For $J_2$, we note that
  {\[|\widehat h|^2\left|\frac{\chi_{\omega_{\vartheta_n}(\alpha)}}{|H_{\vartheta_n}(|\mathbf{z}|^{2\sigma})|}
-\frac{\chi_{\omega_{\vartheta}(\alpha)}}{|H_{\vartheta}(|\mathbf{z}|^{2\sigma})|}
\right|^2\leqslant \frac{4}{\alpha^2}|\widehat h|^2\in L^1(\mathbb{R}^d).\]}
Moreover, for $|H_\vartheta(|\mathbf{z}|^{2\sigma})|\not=\alpha$ then we have
$$  \left|\frac{\chi_{\omega_{\vartheta_n}(\alpha)}}{|H_{\vartheta_n}(|\mathbf{z}|^{2\sigma})|}
-\frac{\chi_{\omega_{\vartheta}(\alpha)}}{|H_{\vartheta}(|\mathbf{z}|^{2\sigma})|}
\right|^2\stackrel{n\to\infty}{\longrightarrow} 0. $$
Similarly to Theorem~\ref{ill-posed}, the equation $H_\vartheta(\nu)=\pm\alpha$ has at most countably many positive solutions $\{\mu_j\}_{j\in J}$, with $J\subset\mathbb{N}$. Hence
	\[{ \mathcal{Z}_{\vartheta,\alpha}:=\{\mathbf{z}\in \mathbb{R}^d: |H_\vartheta
(|\mathbf{z}|^{2\sigma})|=\alpha\}=\bigcup_{j\in J}\{|\mathbf{z}|^{2\sigma}=\mu_j\}. }\]
Hence {$m_d(\mathcal{Z}_{\vartheta,\alpha})=0$}. 
It follows that 
	\[ |\widehat h|^2\left|\frac{\chi_{\omega_{\vartheta_n}(\alpha)}}{|H_{\vartheta_n}(|\mathbf{z}|^{2\sigma})|}
-\frac{\chi_{\omega_{\vartheta}(\alpha)}}{|H_{\vartheta}(|\mathbf{z}|^{2\sigma})|}
\right|^2\stackrel{n\to\infty}{\longrightarrow} 0 \quad  \text{a.e.} \]
Using the Lebesgue dominated convergence theorem, we obtain 
$$ \lim_{n\to\infty}J_2=0. $$ 
This completes the proof of Theorem.   
\end{proof}
	\section{Convergence of truncated regularization}
In this section, our focus is on the convergence of the modified truncated regularization as $\alpha \to 0^+$, rather than on its convergence rate. Consequently, the required conditions are not overly restrictive. We now present the following theorem.
\begin{Th}
Let $\varepsilon>0$, $s \geqslant 0$, and let $\psi, \widetilde\psi \in L^1(0,T)$, 
$h, \widetilde h \in L^2(\mathbb{R}^d)$ be functions satisfying \eqref{data}, with 
$\psi \not\equiv 0$ in $L^1(0,T)$. 
Assume that the equation \eqref{exact-solution} admits a unique solution 
$f \in H^s(\mathbb{R}^d)$ as stated in Theorem~\ref{ill-posed}.  

If there exists a function $\alpha=\alpha(\varepsilon)>0$ such that  
\[
\lim_{\varepsilon\to 0^+}\alpha(\varepsilon)=0, 
\qquad 
\lim_{\varepsilon\to 0^+}\frac{\varepsilon}{[\alpha(\varepsilon)]^{1+s/{2\sigma}}}=0,
\]  
then  
\[
\lim_{\varepsilon\to 0^+}
\|f-f_{\alpha(\varepsilon),\widetilde\psi,\widetilde h}\|_{H^s(\mathbb{R}^d)}=0.
\]  
\end{Th}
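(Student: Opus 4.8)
The plan is to pass to the Fourier side and split the total error into a truncation part and a stability part. Writing
$$\widehat f-\widehat f_{\alpha,\widetilde\psi,\widetilde h}=\widehat f\,\bigl(1-\chi_{\omega_{\widetilde\psi}(\alpha)}\bigr)+\Bigl(\widehat f-\tfrac{\widehat{\widetilde h}}{H_{\widetilde\psi}(|\mathbf{z}|^{2\sigma})}\Bigr)\chi_{\omega_{\widetilde\psi}(\alpha)},$$
Plancherel gives $\|f-f_{\alpha,\widetilde\psi,\widetilde h}\|_{H^s(\mathbb{R}^d)}^2\leqslant 2(T_{\mathrm{tr}}+T_{\mathrm{st}})$, where $T_{\mathrm{tr}}$ is supported on $\{|H_{\widetilde\psi}(|\mathbf{z}|^{2\sigma})|\leqslant\alpha\}$ and $T_{\mathrm{st}}$ on $\omega_{\widetilde\psi}(\alpha)$. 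Before estimating them I would record two elementary facts about the kernel $H_\psi$. First, since $|e^{-(T-s)\nu}(T-s)|\leqslant T$ for $\nu\geqslant 0$, one has the uniform bound $|H_\psi(\nu)-H_{\widetilde\psi}(\nu)|\leqslant T\|\psi-\widetilde\psi\|_{L^1(0,T)}\leqslant T\varepsilon$ for all $\nu\geqslant 0$. Second, from $\max_{\tau\geqslant 0}\tau e^{-\tau\nu}=1/(e\nu)$ one gets the decay estimate $|H_{\widetilde\psi}(\nu)|\leqslant \|\widetilde\psi\|_{L^1(0,T)}/(e\nu)$; since $\|\widetilde\psi\|_{L^1(0,T)}\leqslant\|\psi\|_{L^1(0,T)}+\varepsilon$ is uniformly bounded for small $\varepsilon$, this gives on $\omega_{\widetilde\psi}(\alpha)$ the frequency cutoff $|\mathbf{z}|^{2\sigma}\leqslant \|\widetilde\psi\|_{L^1(0,T)}/(e\alpha)$, hence $(1+|\mathbf{z}|^2)^s\leqslant C\alpha^{-s/\sigma}$ with $C$ independent of $\varepsilon$.

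With these in hand, the stability part is the routine piece. On $\omega_{\widetilde\psi}(\alpha)$ I would use $\widehat h=H_\psi\widehat f$ to write
$$\widehat f-\frac{\widehat{\widetilde h}}{H_{\widetilde\psi}}=\frac{\widehat f\,(H_{\widetilde\psi}-H_\psi)+(\widehat h-\widehat{\widetilde h})}{H_{\widetilde\psi}},$$
and bound $|H_{\widetilde\psi}|>\alpha$ together with $|H_{\widetilde\psi}-H_\psi|\leqslant T\varepsilon$. The first summand contributes at most $(T^2\varepsilon^2/\alpha^2)\|f\|_{H^s(\mathbb{R}^d)}^2$ (here the weight multiplies $|\widehat f|^2$ directly, so no cutoff factor is needed), while the second, after inserting $(1+|\mathbf{z}|^2)^s\leqslant C\alpha^{-s/\sigma}$ and $\|h-\widetilde h\|_{L^2(\mathbb{R}^d)}\leqslant\varepsilon$, contributes at most $C\varepsilon^2/\alpha^{2+s/\sigma}$. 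Since $s\geqslant 0$ and $\alpha<1$ for small $\varepsilon$, the first bound is also dominated by $\varepsilon^2/\alpha^{2+s/\sigma}$, so $T_{\mathrm{st}}\leqslant C\bigl(\varepsilon/\alpha^{1+s/(2\sigma)}\bigr)^2\to 0$ by the second hypothesis on $\alpha(\varepsilon)$.

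The main obstacle is the truncation part $T_{\mathrm{tr}}=\int_{\{|H_{\widetilde\psi}(|\mathbf{z}|^{2\sigma})|\leqslant\alpha\}}(1+|\mathbf{z}|^2)^s|\widehat f|^2\,d\mathbf{z}$, because the domain of integration is governed by the perturbed datum $\widetilde\psi$ and therefore moves with $\varepsilon$; one cannot apply a fixed-set limiting argument directly. To remove this dependence I would invoke the uniform bound again: if $|H_{\widetilde\psi}(\nu)|\leqslant\alpha$ then $|H_\psi(\nu)|\leqslant\alpha+T\varepsilon$, so that $\{|H_{\widetilde\psi}(|\mathbf{z}|^{2\sigma})|\leqslant\alpha\}\subseteq\{|H_\psi(|\mathbf{z}|^{2\sigma})|\leqslant\alpha+T\varepsilon\}$ and hence $T_{\mathrm{tr}}\leqslant\int_{\{|H_\psi(|\mathbf{z}|^{2\sigma})|\leqslant\delta(\varepsilon)\}}(1+|\mathbf{z}|^2)^s|\widehat f|^2\,d\mathbf{z}$ with $\delta(\varepsilon)=\alpha(\varepsilon)+T\varepsilon\to 0$. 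Now the integrating set depends only on the fixed $\psi$, the nested sets $\{|H_\psi(|\mathbf{z}|^{2\sigma})|\leqslant\delta\}$ shrink as $\delta\to 0^+$ to $\mathcal{Z}_\psi$, and $m_d(\mathcal{Z}_\psi)=0$ by Theorem~\ref{ill-posed}(b). Since $f\in H^s(\mathbb{R}^d)$, the integrand $(1+|\mathbf{z}|^2)^s|\widehat f|^2$ lies in $L^1(\mathbb{R}^d)$, so the dominated (equivalently, monotone) convergence theorem yields $T_{\mathrm{tr}}\to 0$. Combining the two estimates gives $\|f-f_{\alpha(\varepsilon),\widetilde\psi,\widetilde h}\|_{H^s(\mathbb{R}^d)}\to 0$.

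It is worth noting precisely where the two hypotheses enter: the condition $\varepsilon/[\alpha(\varepsilon)]^{1+s/(2\sigma)}\to 0$ is exactly calibrated to absorb the price $\alpha^{-s/\sigma}$ one pays for the weight $(1+|\mathbf{z}|^2)^s$ over the expanding frequency band in $T_{\mathrm{st}}$, whereas $\alpha(\varepsilon)\to 0$ (together with the automatic $\delta=\alpha+T\varepsilon\to 0$) is what forces $T_{\mathrm{tr}}$ to vanish through the measure-zero set $\mathcal{Z}_\psi$.
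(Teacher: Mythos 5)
Your proof is correct and follows essentially the same route as the paper's: the same Lipschitz bound $|H_\psi-H_{\widetilde\psi}|\leqslant T\varepsilon$, the same decay estimate via $\tau e^{-\tau\nu}\leqslant 1/(e\nu)$ giving the frequency cutoff on $\omega_{\widetilde\psi}(\alpha)$, the same inclusion $\pi_{\widetilde\psi}(\alpha)\subset\pi_\psi(\alpha+T\varepsilon)$, and the same dominated-convergence argument through $m_d(\mathcal{Z}_\psi)=0$. The only cosmetic differences are that you merge the paper's kernel-perturbation term $I_1$ and data-perturbation term $I_3$ into a single stability term, and you apply the decay bound to $H_{\widetilde\psi}$ directly (yielding a cutoff in terms of $\alpha$ rather than $\alpha-\varepsilon T$), which slightly streamlines the paper's Step 3.
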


\begin{rem}\leavevmode
\begin{enumerate}
\item[(i)] If we consider the problem in the $L^2(\mathbb{R}^d)$
setting (i.e., $s=0$), then we can obtain convergence under the conditions
\[
 \lim_{\varepsilon\to 0^+}\alpha(\varepsilon) = 0, 
 \quad 
 \lim_{\varepsilon\to 0^+}\frac{\varepsilon}{\alpha(\varepsilon)} = 0.  
\]

\item[(ii)] Assume that $\varepsilon = o(\alpha(\varepsilon))$. Then
\[
\lim_{\varepsilon \to 0^+} \frac{\alpha(\varepsilon)\pm C\varepsilon}{\alpha(\varepsilon)} = 1.
\]
Hence, $\alpha(\varepsilon)\pm C\varepsilon \asymp \alpha(\varepsilon)$ as $\varepsilon \to 0^+$.

\end{enumerate}
\end{rem}

\begin{proof}
Let $\alpha>0$ and $\vartheta\in L^1(0,T)$. From \eqref{omega}, we have
\[
\omega_{\vartheta}(\alpha) := \{\mathbf{z} \in \mathbb{R}^d : |H_\vartheta(|\mathbf{z}|^{2\sigma})| > \alpha \}.
\]
and clearly, we obtain $\omega_\vartheta(\alpha)=\mathbb{R}^d \setminus \pi_\vartheta(\alpha)$.  

From the regularization solution \eqref{regularization-solution} and the exact solution \eqref{1exact-solution}, we have
\[
f_{\alpha,\widetilde\psi,\widetilde h}(\mathbf{z})
= \mathcal{F}^{-1} \Bigg(
\frac{\widehat{\widetilde h}(\mathbf{z})}{H_{\widetilde\psi}(|\mathbf{z}|^{2\sigma})} 
\chi_{\{ \mathbf{z}\in \mathbb{R}^d : |H_{\widetilde\psi}(|\mathbf{z}|^{2\sigma})| > \alpha \}}
\Bigg), \quad
\widehat f(\mathbf{z}) = \frac{\widehat h(\mathbf{z})}{H_\psi(|\mathbf{z}|^{2\sigma})} .
\]
To estimate the error $\|f - f_{\alpha,\widetilde\psi,\widetilde h}\|_{H^s(\mathbb{R}^d)}$, we first split 
\[
f - f_{\alpha,\widetilde\psi,\widetilde h} 
= (f - f_{\alpha,\widetilde\psi,h}) + (f_{\alpha,\widetilde\psi,h} - f_{\alpha,\widetilde\psi,\widetilde h}),
\]
where $f_{\alpha,\widetilde\psi,h}$ is the regularized solution with the exact data $h$. 

	Applying the inequality $(a+b)^2 \leqslant 2(a^2+b^2)$ in $H^s$, we get
\[
\|f - f_{\alpha,\widetilde\psi,\widetilde h}\|_{H^s(\mathbb{R}^d)}^2\leqslant 2 \|f - f_{\alpha,\widetilde\psi,h}\|_{H^s(\mathbb{R}^d)}^2 + 2 \|f_{\alpha,\widetilde\psi,h} - f_{\alpha,\widetilde\psi,\widetilde h}\|_{H^s(\mathbb{R}^d)}^2.
\]
Next, we decompose the first term according to the set
\[
\omega_{\widetilde\psi}(\alpha) := \{\mathbf{z} \in \mathbb{R}^d : |H_{\widetilde\psi}(|\mathbf{z}|^{2\sigma})| > \alpha \}, 
\quad 
\pi_{\widetilde\psi}(\alpha) := \mathbb{R}^d \setminus \omega_{\widetilde\psi}(\alpha),
\]
so that
\begin{eqnarray*}
\|f - f_{\alpha,\widetilde\psi,h}\|_{H^s(\mathbb{R}^d)}^2 &=& \int_{\omega_{\widetilde\psi}(\alpha)} (1+|\mathbf{z}|^2)^s 
\Big|\widehat f(\mathbf{z}) - \widehat f(\mathbf{z}) \frac{H_\psi(|\mathbf{z}|^{2\sigma})}{H_{\widetilde\psi}(|\mathbf{z}|^{2\sigma})} \Big|^2 d\mathbf{z} + \int_{\pi_{\widetilde\psi}(\alpha)} (1+|\mathbf{z}|^2)^s |\widehat f(\mathbf{z})|^2 d\mathbf{z}\\ 
&=:& I_1 + I_2,
\end{eqnarray*}
and we denote
\[
I_3 := \|f_{\alpha,\widetilde\psi,h} - f_{\alpha,\widetilde\psi,\widetilde h}\|_{H^s(\mathbb{R}^d)}^2.
\]
Finally, combining the above, we obtain
\begin{equation}\label{insert}
\|f - f_{\alpha,\widetilde\psi,\widetilde h}\|_{H^s(\mathbb{R}^d)}^2 \leqslant 2 (I_1 + I_2 + I_3),
\end{equation}
where
\begin{align*}
I_1 &= \int_{\omega_{\widetilde\psi}(\alpha)} (1+|\mathbf{z}|^2)^s
\left|\widehat f(\mathbf{z}) - \widehat f(\mathbf{z}) \frac{H_\psi(|\mathbf{z}|^{2\sigma})}{H_{\widetilde\psi}(|\mathbf{z}|^{2\sigma})}\right|^2 d\mathbf{z},\\
I_2 &= \int_{\pi_{\widetilde\psi}(\alpha)} (1+|\mathbf{z}|^2)^s |\widehat f(\mathbf{z})|^2 d\mathbf{z},\\
I_3 &= \|f_{\alpha, \widetilde \psi, h} - f_{\alpha, \widetilde \psi, \widetilde h}\|_{H^s(\mathbb{R}^d)}^2.
\end{align*}
Observe that
\begin{align}\label{I12}
\|f - f_{\alpha, \widetilde \psi, h}\|_{H^s(\mathbb{R}^d)}^2 = I_1 + I_2.
\end{align}
\textbf{Step 1. Prove $\lim_{\varepsilon \to 0^+} I_1 = 0$.}

We have, for every $\psi, \vartheta \in L^1(0,T)$,
\begin{equation}\label{Lipschitz}
|H_\psi(|\mathbf{z}|^{2\sigma}) - H_\vartheta(|\mathbf{z}|^{2\sigma})| 
\leqslant T \|\psi - \vartheta\|_{L^1(0,T)}.
\end{equation}
Indeed,
\begin{align*}
|H_\psi(|\mathbf{z}|^{2\sigma}) - H_\vartheta(|\mathbf{z}|^{2\sigma})|
&= \left|\int_0^T e^{-|\mathbf{z}|^{2\sigma}(T-s)}(T-s)(\psi(s)-\vartheta(s)) ds\right|\\
&\leqslant \int_0^T (T-s) |\psi(s)-\vartheta(s)| ds \le T \|\psi - \vartheta\|_{L^1(0,T)}.
\end{align*}

Hence,
\begin{align}\label{I1-estimate}
I_1 &\leqslant \int_{\omega_{\widetilde\psi}(\alpha)} (1+|\mathbf{z}|^2)^s |\widehat f(\mathbf{z})|^2
\left|\frac{H_{\widetilde\psi}(|\mathbf{z}|^{2\sigma}) - H_\psi(|\mathbf{z}|^{2\sigma})}{H_{\widetilde\psi}(|\mathbf{z}|^{2\sigma})}\right|^2 d\mathbf{z} \nonumber\\
&\leqslant \frac{T^2 \varepsilon^2}{\alpha^2} \int_{\omega_{\widetilde\psi}(\alpha)} (1+|\mathbf{z}|^2)^s |\widehat f(\mathbf{z})|^2 d\mathbf{z} 
\leqslant \frac{T^2 \varepsilon^2}{\alpha^2} \|f\|_{H^s(\mathbb{R}^d)}^2,
\end{align}
and thus, since $\lim_{\varepsilon\to 0^+}\alpha(\varepsilon)=0,\quad \lim_{\varepsilon \to 0^+} \frac{\varepsilon}{[\alpha(\varepsilon)]^{1+s/a}}=0$, we deduce $\lim_{\varepsilon \to 0^+} I_1 = 0$.\\
\textbf{Step 2. Estimate $I_2$ and show $\lim_{\varepsilon \to 0^+} I_2 = 0$.}

By definition, for every $\mathbf{z} \in \pi_{\widetilde\psi}(\alpha)$, we have 
\[
|H_{\widetilde\psi}(|\mathbf{z}|^{2\sigma})| \leqslant \alpha.
\] 
Using \eqref{Lipschitz}, it follows that
\[
|H_\psi(|\mathbf{z}|^{2\sigma})| \leqslant |H_\psi - H_{\widetilde\psi}| + |H_{\widetilde\psi}| \leqslant \alpha + \varepsilon T.
\]
Hence, we have the inclusion
\[
\pi_{\widetilde\psi}(\alpha) \subset \pi_\psi(\alpha + \varepsilon T),
\]
which gives
\[
I_2 \leqslant \int_{\pi_\psi(\alpha + \varepsilon T)} (1+|\mathbf{z}|^2)^s |\widehat f(\mathbf{z})|^2 d\mathbf{z}=\int_{\mathbb{R}^d} \chi_{\pi_{\psi}(\alpha+\varepsilon T)}(\mathbf{z})(1+|\mathbf{z}|^2)^{s}\left|\widehat f(\mathbf{z})\right|^2d\mathbf{z}.
\]
Next, from the definition 
\[
\mathcal{Z}_\psi := \{\mathbf{z} \in \mathbb{R}^d : H_\psi(|\mathbf{z}|^{2\sigma}) = 0\},
\] 
by Theorem \ref{ill-posed}, $m(\mathcal{Z}_\psi) = 0$. We split the integral over $\pi_\psi(\alpha+\varepsilon T)$ as
\[
\pi_\psi(\alpha+\varepsilon T) =  \mathcal{Z}_\psi \,\cup\, \big(\pi_\psi(\alpha+\varepsilon T) \setminus \mathcal{Z}_\psi\big).
\]
On the set $\mathcal{Z}_\psi$, the integral is zero since $m(\mathcal{Z}_\psi)=0$. On the set $\pi_\psi(\alpha+\varepsilon T) \setminus \mathcal{Z}_\psi$, for each $\mathbf{z} \notin \mathcal{Z}_\psi$, the characteristic function satisfies
\[
\chi_{\pi_\psi(\alpha+\varepsilon T)}(\mathbf{z}) \to 0 \quad \text{as} \quad \varepsilon \to 0^+.
\]
Moreover, the function
\[
(1+|\mathbf{z}|^2)^s |\widehat f(\mathbf{z})|^2 \chi_{\pi_\psi(\alpha+\varepsilon T)}(\mathbf{z})
\]
is dominated by $(1+|\mathbf{z}|^2)^s |\widehat f(\mathbf{z})|^2 \in L^1(\mathbb{R}^d)$, which is independent of $\varepsilon$. 
Therefore, by the Lebesgue dominated convergence theorem, we conclude that
	\[\lim_{\varepsilon\to 0^+}\int_{\mathbb{R}^d} \chi_{\pi_{\psi}(\alpha+\varepsilon T)}(\mathbf{z})(1+|\mathbf{z}|^2)^{s}\left|\widehat f(\mathbf{z})\right|^2d\mathbf{z}=0.\]
Hence
\[
\lim_{\varepsilon \to 0^+} I_2 = 0.
\]
\textbf{Step 3. Prove $\lim_{\varepsilon \to 0^+} I_3 = 0$.}

We first prove that there exist $\lambda_0,z_1, a>0$  such that
\begin{equation}\label{H-upper-bound}
    |\mathbf{z}|^a|H_{\psi}(|\mathbf{z}|^{2\sigma})|\leqslant 2\lambda_0~~\text{for}~|\mathbf{z}|\geqslant z_1.  
\end{equation}
 In fact, we have
\begin{align*}
    |\mathbf{z}|^{2\sigma}|H_\psi(|\mathbf{z}|^{2\sigma})| &\leqslant \int_0^T e^{-|\mathbf{z}|^{2\sigma}(T-s)}|\mathbf{z}|^{2\sigma}(T-s)|\psi(s)|ds.
\end{align*} 
Using the inequality $ye^{-y}\leqslant e^{-1}$ for every $y \geqslant 0$ we obtain
$$ |\mathbf{z}|^{2\sigma}|H_\psi(|\mathbf{z}|^{2\sigma})|
\leqslant e^{-1}\|\psi\|_{L^1(0,T)}.    $$
So we can choose $a=2\sigma$. However, in the proof, we still use the general constant $a$ because there are many constants $a$ that satisfy \eqref{H-upper-bound}.
We have
\begin{equation}\label{second-term}
I_3 = \int_{\omega_{\widetilde\psi}(\alpha)} (1+|\mathbf{z}|^2)^s 
\left|\frac{\widehat{\widetilde h}(\mathbf{z}) - \widehat h(\mathbf{z})}{H_{\widetilde\psi}(|\mathbf{z}|^{2\sigma})}\right|^2 d\mathbf{z}.
\end{equation}
If $\mathbf{z}\in \omega_{\widetilde \psi}(\alpha)$ then 
\[  |H_{\psi}(|\mathbf{z}|^{2\sigma})|\geqslant |H_{\widetilde\psi}(|\mathbf{z}|^{2\sigma})|-|H_{\widetilde\psi}(|\mathbf{z}|^{2\sigma})-H_{\psi}(|\mathbf{z}|^{2\sigma}) |\geqslant \alpha-\varepsilon T.\]
Hence for $|\mathbf{z}|\geqslant z_1$ we have 
	\[ (\alpha-\varepsilon T)|\mathbf{z}|^a \leqslant |\mathbf{z}|^aH_{\psi}(|\mathbf{z}|^{2\sigma})| \leqslant 2\lambda_0.\]
It follows that 
	 \[ |\mathbf{z}|\leqslant \left(\frac{2\lambda_0}{\alpha-\varepsilon T}\right)^{1/a}.\]
Hence, from \eqref{second-term} and $\lim_{\varepsilon\to 0^+}\alpha(\varepsilon)=0,\quad \lim_{\varepsilon \to 0^+} \frac{\varepsilon}{[\alpha(\varepsilon)]^{1+s/a}}=0$, we deduce 
\begin{equation}\label{I3-general}
I_3 \leqslant \frac{1}{\alpha^2} \left[1 + \left(\frac{2 \lambda_0}{\alpha - \varepsilon T}\right)^{2/a}\right]^s \|\widehat{\widetilde h} - \widehat h\|_{L^2(\mathbb{R}^d)}^2 
\leqslant  \frac{C\varepsilon^2}{\alpha^{2(1+s/a)}}.
\end{equation}
Combining Steps 1--3 and \eqref{I12}, we complete the proof of the theorem.
\end{proof}

\section{Convergence rate of the regularization}

This section is devoted to a rigorous analysis of the convergence rate of the truncated regularization method. 
To obtain estimates that improve upon those presented in the previous section, a detailed study of the properties of the function $H_\psi$ is required. 
Using the definition of $H_\psi$ in \eqref{H-function}, we first establish a lemma concerning the set of its zeros.

	\begin{lem}\label{H-lemma}
  Let $P>0, \theta< 2$, $p_T\not=0$. If $\lim_{\tau\to 0^+}\tau^\theta\psi(T-\tau)= p_T,
     |\tau^\theta\psi(T-\tau)|\leqslant P$ then we have
$$ \lim_{x\to\infty}x^{-\theta+2}H_\psi(x)=p_T\Gamma(-\theta+2),  $$
where $\Gamma(z)=\int_0^\infty t^{z-1}e^{-t}dt$ is the Gamma function. 
\end{lem}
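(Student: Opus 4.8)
The plan is to reduce $H_\psi(x)$ to an integral to which the Lebesgue dominated convergence theorem applies, by means of two changes of variables. First I would substitute $\tau = T-s$ in the definition \eqref{H-function}, obtaining
\[
H_\psi(x) = \int_0^T e^{-\tau x}\,\tau\,\psi(T-\tau)\,d\tau.
\]
Setting $g(\tau) := \tau^\theta \psi(T-\tau)$, the hypotheses say that $g(\tau)\to p_T$ as $\tau\to 0^+$ and $|g(\tau)|\leqslant P$ for all $\tau$, so that $\tau\,\psi(T-\tau)=\tau^{1-\theta}g(\tau)$ and hence
\[
H_\psi(x) = \int_0^T e^{-\tau x}\,\tau^{1-\theta}\,g(\tau)\,d\tau.
\]

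Next I would rescale by putting $u=\tau x$, so that $\tau=u/x$, $d\tau=du/x$, and the upper endpoint becomes $Tx$. Collecting the powers of $x$ coming from $\tau^{1-\theta}$ and from $d\tau$ produces exactly the normalizing factor $x^{-\theta+2}$ in the statement, and
\[
x^{-\theta+2}H_\psi(x) = \int_0^{Tx} e^{-u}\,u^{1-\theta}\,g(u/x)\,du = \int_0^\infty e^{-u}\,u^{1-\theta}\,g(u/x)\,\chi_{[0,Tx]}(u)\,du,
\]
where absorbing the moving endpoint into the characteristic function $\chi_{[0,Tx]}$ puts the integral over a fixed domain.

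Then I would pass to the limit $x\to\infty$ inside the integral. For each fixed $u>0$ one has $\chi_{[0,Tx]}(u)\to 1$ and $g(u/x)\to p_T$, since $u/x\to 0^+$; thus the integrand converges pointwise to $p_T\,e^{-u}u^{1-\theta}$. The uniform bound $|g|\leqslant P$ furnishes the $x$-independent dominating function $P\,e^{-u}u^{1-\theta}$, which lies in $L^1(0,\infty)$ because the exponential controls large $u$ while integrability near $u=0$ demands $1-\theta>-1$, i.e.\ precisely the standing hypothesis $\theta<2$. The dominated convergence theorem then yields
\[
\lim_{x\to\infty}x^{-\theta+2}H_\psi(x) = p_T\int_0^\infty e^{-u}u^{1-\theta}\,du = p_T\,\Gamma(-\theta+2),
\]
as claimed.

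The only delicate point — the main obstacle — is the rigorous interchange of limit and integral in the presence of the growing upper limit $Tx$. Rewriting it via $\chi_{[0,Tx]}$ and bounding the whole integrand by the single integrable majorant $P\,e^{-u}u^{1-\theta}$ is exactly what legitimizes the application of dominated convergence, and the assumption $\theta<2$ enters solely to guarantee that this majorant is integrable at the origin.
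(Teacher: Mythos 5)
Your proof is correct and takes essentially the same route as the paper's: your two substitutions $\tau=T-s$ and $u=\tau x$ combine into exactly the paper's single change of variables $y=x(T-s)$, after which both arguments normalize by $x^{2-\theta}$ and conclude by dominated convergence. If anything, your write-up is slightly more careful than the paper's, since you make the integrable majorant $P\,e^{-u}u^{1-\theta}$, the characteristic function $\chi_{[0,Tx]}$ handling the moving endpoint, and the role of $\theta<2$ explicit, whereas the paper invokes dominated convergence without spelling these out.
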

\begin{rem}
	\begin{enumerate}\leavevmode
    \item[(i)] We note that the set of $\psi$ is not small, for example: $\psi(t)=(T-t)^{-\theta}[b_0+(T-t)\xi(t)]$ where $b_0 \ne 0,\theta < 1$, and $\xi \in L^\infty(0,T)$.
    \item[(ii)] From Lemma \ref{H-lemma}, we can put $a=2\sigma(2-\theta)$ to obtain \eqref{H-upper-bound}. 
	\end{enumerate}
\end{rem}
\begin{proof} 
	By putting $y=x(T-s)$, we obtain 
     $$  H(x)=\int_0^{xT} e^{-y}\frac{y}{x}\psi\left(T-\frac{y}{x}\right)
     \frac{1}{x}dy=
     \int_0^{xT} e^{-y}\left(\frac{y}{x}\right)^{-\theta+1}
     \left(\frac{y}{x}\right)^{\theta}\psi
     \left(T-\frac{y}{x}\right)
     \frac{1}{x}dy. $$
     Hence 
     {\[  x^{2-\theta}H(x)=\int_0^{xT} e^{-y}y^{-\theta+1}
     \left(\frac{y}{x}\right)^{\theta}\psi\left(T-\frac{y}{x}\right)
     dy. \]}
     Using the Lebesgue dominated convergence theorem gives
       {\[ \lim_{x\to\infty}x^{-\theta+2}H(x)=\int_0^\infty 
     e^{-y}y^{(-\theta+2)-1}p_Tdy=p_T\Gamma(-\theta+2),\]}
     where $\Gamma(z)=\int_0^\infty t^{z-1}e^{-t}dt$ is the Gamma function.

\end{proof}   

\begin{lem}\label{l1}
Let $h_0>0$ be a constant such that
\begin{equation*} 
    \frac{1}{2}\,\nu^{\theta-2}\,|p_T \Gamma(2-\theta)|
    \leqslant |H_{\psi}(\nu)| \leqslant 2\,\nu^{\theta-2}\,|p_T \Gamma(2-\theta)| 
    \quad \text{for all } \nu \geqslant h_0.
\end{equation*} 

Define
\begin{align*}
    B_\psi(\rho) &= \{\mathbf{z} \in \mathbb{R}^d : |H_\psi(|\mathbf{z}|^{2\sigma})| \leqslant \rho,~ |\mathbf{z}|^{2\sigma} \leqslant h_0\},\\
    C_\psi(\rho) &= \{\mathbf{z} \in \mathbb{R}^d : |H_\psi(|\mathbf{z}|^{2\sigma})| \leqslant \rho,~ |\mathbf{z}|^{2\sigma} > h_0\}.
\end{align*}

Then, for $\pi_\psi(\rho)$ defined in Theorem \ref{ill-posed}, we have
\[
\pi_\psi(\rho) = B_\psi(\rho) \cup C_\psi(\rho), \quad B_\psi(\rho) \cap C_\psi(\rho) = \emptyset,
\]
and
	\begin{enumerate}
	\item[(a)] for 
\[
C_1 := \left( \frac{1}{2} |p_T| \Gamma(2-\theta) \right)^{\frac{1}{2\sigma(2-\theta)}},
\]
we have
\[
C_\psi(\rho) \subset \{\mathbf{z} \in \mathbb{R}^d : |\mathbf{z}| > C_1 \rho^{-\frac{1}{2\sigma(2-\theta)}} \},
\]

	\item[(b)] there exist constants $\beta, \rho_0, C_0 > 0$, independent of $\rho$, such that for all $0 < \rho < \rho_0$, 
\[
m_d[B_\psi(\rho)] \leqslant C_0 \rho^\beta,
\]
where $m_d$ denotes the Lebesgue measure in $\mathbb{R}^d$.
	\end{enumerate}
\end{lem}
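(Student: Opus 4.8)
The plan is to dispatch the decomposition and part (a) by direct computation, and to reduce part (b) to a one-dimensional sub-level-set estimate for the analytic function $H_\psi$.

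First, $\pi_\psi(\rho)=B_\psi(\rho)\cup C_\psi(\rho)$ with $B_\psi(\rho)\cap C_\psi(\rho)=\emptyset$ is immediate from the definitions: every $\mathbf{z}$ with $|H_\psi(|\mathbf{z}|^{2\sigma})|\leqslant\rho$ satisfies exactly one of $|\mathbf{z}|^{2\sigma}\leqslant h_0$ or $|\mathbf{z}|^{2\sigma}>h_0$, and these alternatives cut out $B_\psi(\rho)$ and $C_\psi(\rho)$. For part (a) I would take $\mathbf{z}\in C_\psi(\rho)$, put $\nu=|\mathbf{z}|^{2\sigma}>h_0$, and insert $\nu$ into the lower bound hypothesis. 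Since $\theta<2$ we have $\nu^{\theta-2}=|\mathbf{z}|^{-2\sigma(2-\theta)}$, so
\[
\rho\geqslant|H_\psi(|\mathbf{z}|^{2\sigma})|\geqslant\frac12|p_T|\Gamma(2-\theta)\,|\mathbf{z}|^{-2\sigma(2-\theta)};
\]
solving for $|\mathbf{z}|$ gives the stated bound $|\mathbf{z}|\geqslant C_1\rho^{-1/(2\sigma(2-\theta))}$ with exactly the indicated $C_1$ (using $\Gamma(2-\theta)>0$, so $|p_T\Gamma(2-\theta)|=|p_T|\Gamma(2-\theta)$). This uses only the lower bound and presents no difficulty.

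The substance lies in part (b). I would pass to the radial variable exactly as in the proof of Theorem~\ref{ill-posed}: with $\nu=|\mathbf{z}|^{2\sigma}$ and spherical coordinates,
\[
m_d[B_\psi(\rho)]=\frac{\omega_d}{2\sigma}\int_{S(\rho)}\nu^{\frac{d-2}{2\sigma}}\,d\nu,
\qquad
S(\rho):=\{\nu\in[0,h_0]:|H_\psi(\nu)|\leqslant\rho\},
\]
so everything reduces to controlling the length of $S(\rho)$. Because $H_\psi$ is analytic and, by Theorem~\ref{ill-posed}, $H_\psi\not\equiv0$, it has on the compact interval $[0,h_0]$ only finitely many zeros $\nu_1,\dots,\nu_k$, each of finite order $m_j$; set $M=\max_j m_j$. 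Away from these zeros $|H_\psi|\geqslant c>0$, so for $\rho<c$ the set $S(\rho)$ sits inside a fixed union of small neighbourhoods of the $\nu_j$. Near $\nu_j$ I would factor $H_\psi(\nu)=(\nu-\nu_j)^{m_j}u_j(\nu)$ with $u_j(\nu_j)\neq0$, yielding $|H_\psi(\nu)|\geqslant c_j|\nu-\nu_j|^{m_j}$ locally; hence $\{|H_\psi|\leqslant\rho\}$ near $\nu_j$ is contained in $\{|\nu-\nu_j|\leqslant(\rho/c_j)^{1/m_j}\}$, of length $\lesssim\rho^{1/m_j}\leqslant\rho^{1/M}$ for $\rho\leqslant1$. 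Summing over the finitely many zeros gives $m(S(\rho))\leqslant C\rho^{1/M}$.

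To finish I would control the radial weight on $S(\rho)$. For $d\geqslant2$ the exponent $\frac{d-2}{2\sigma}\geqslant0$, so $\nu^{(d-2)/(2\sigma)}\leqslant h_0^{(d-2)/(2\sigma)}$ on $[0,h_0]$ and therefore $m_d[B_\psi(\rho)]\leqslant C_0\rho^{1/M}$, so $\beta=1/M$ with a suitable $\rho_0\leqslant1$ and $C_0$ works. The one delicate point is the integrable singularity of the weight at $\nu=0$ when $d=1$: there I would either use that $S(\rho)$ avoids a neighbourhood of the origin when $H_\psi(0)\neq0$, or, if $0$ is itself a zero, absorb the singularity by integrating the explicit power $\nu^{(d-2)/(2\sigma)}$ over an interval $[0,(\rho/c_0)^{1/m_0}]$ and lowering $\beta$ accordingly. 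I expect the sub-level-set estimate—extracting the finite order of vanishing of $H_\psi$ and converting it into the measure bound—to be the main obstacle; the reduction to $\mathbf{z}$-space and part (a) are routine by comparison.
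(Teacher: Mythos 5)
Your proposal is correct, and for the decomposition and part (a) it coincides with the paper's proof (same use of the lower bound $\tfrac12|p_T|\Gamma(2-\theta)\,|\mathbf{z}|^{-2\sigma(2-\theta)}\leqslant|H_\psi(|\mathbf{z}|^{2\sigma})|\leqslant\rho$, solved for $|\mathbf{z}|$). For part (b) you share the paper's core idea --- $H_\psi$ is analytic, nonvanishing for $\nu\geqslant h_0$ by the two-sided hypothesis, hence has finitely many zeros of finite order in $[0,h_0]$, and the sub-level set is trapped in intervals of width a power of $\rho$ around them --- but your execution is genuinely different. The paper uses a \emph{global} factorization $H_\psi(\nu)=\widetilde H_\psi(\nu)\prod_{k=1}^N(\nu-\nu_k)^{p_k}$ with $|\widetilde H_\psi|\geqslant M_0$, chooses explicit half-widths $\delta_k\sim\rho^{1/(2p_k)}$ (the exponent $1/(2p_k)$ arising because in the interior case two adjacent factors are bounded below simultaneously), verifies $|H_\psi|\geqslant\rho$ outside $\bigcup_k(\nu_k-\delta_k,\nu_k+\delta_k)$ by a three-case analysis, and then bounds $m_d[B_\psi(\rho)]$ directly in $\mathbb{R}^d$ by differences of ball volumes, arriving at $\beta=\min_{k,j}\frac{j+1}{2p_k}$. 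You instead reduce via polar coordinates to the one-dimensional measure of $S(\rho)=\{\nu\in[0,h_0]:|H_\psi(\nu)|\leqslant\rho\}$, use \emph{local} factorizations $H_\psi(\nu)=(\nu-\nu_j)^{m_j}u_j(\nu)$ plus a compactness bound off the zeros, and get $m(S(\rho))\leqslant C\rho^{1/M}$ with $M=\max_j m_j$, then reinstate the radial weight. Your route buys two things: it cleanly separates the $\nu$-line from $\mathbf{z}$-space (the paper's final volume estimate conflates the two, using balls of radius $\nu_k+\delta_k$ where $(\nu_k+\delta_k)^{1/(2\sigma)}$ is meant), and it yields a larger, hence stronger, exponent $\beta=1/M$ compared with the paper's $\leqslant 1/(2\max_k p_k)$. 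One caveat: you quote the paper's Jacobian exponent $\frac{d-2}{2\sigma}$, but the correct computation gives $\rho^{d-1}\,d\rho=\frac{1}{2\sigma}\nu^{\frac{d}{2\sigma}-1}\,d\nu$ (the two agree only when $\sigma=1$); since $\frac{d}{2\sigma}-1>-1$ always, the singularity at $\nu=0$ in your $d=1$ case is automatically integrable, and your fallback of integrating the explicit power over $[0,(\rho/c_0)^{1/m_0}]$ (lowering $\beta$ to account for the extra factor $\rho^{\frac{1}{2\sigma m_0}}$) closes that case rigorously.
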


 \begin{proof}
From the definition of $\pi_\psi(\rho), B_\psi(\rho), C_\psi(\rho)$ we obtain $\pi_\psi(\rho)=B_\psi(\rho)\cup C_\psi(\rho)$ and $B_\psi(\rho)\cap C_\psi(\rho)=\emptyset$.\\
\textbf{We prove Part (a).} 

Letting $\mathbf{z}\in C_\psi(\rho)$, we obtain $|\mathbf{z}|^{2\sigma}>h_0$ and 
$$ C_1^{2\sigma(2-\theta)}|\mathbf{z}|^{2\sigma(\theta-2)}= \frac{1}{2}|\mathbf{z}|^{2\sigma(\theta-2)}|p_T| \Gamma(2-\theta)\leqslant H_\psi(|\mathbf{z}|^{2\sigma})<\rho. $$ 
it follows that 
$$ |\mathbf{z}|\geqslant C_1\rho^{-\frac{1}{2\sigma(2-\theta)}}.  $$
This completes the proof of Part (a).\\
\textbf{We consider Part (b).}  

Using Lemma \ref{H-function} gives $|H_\psi(\nu)|\not=0$ as $|\nu|\geqslant h_0$, $\nu\in \mathbb{R} $. So, by the analyticity of $H_\psi(z), z\in\mathbb{C}$, it has only finite positive zeros $\nu_k, k=\overline{1,N}$ on the real axis and we can describe $H_\psi(\nu)$ in the following form 
  $$H_\psi(\nu)=\widetilde H_\psi(\nu) \prod_{k=1}^N\left(\nu-\nu_k\right)^{p_k}~~ 
 \text{ where}~ \widetilde H_\psi(\nu) \ne 0, \quad \text{and} \quad 0\leqslant\nu \leqslant h_0.$$ 
  Since  $\widetilde H_\psi(\nu) \neq 0$ for $0\leqslant \nu \leqslant h_0$, there is $M_0>0$ such that 
  $$|\widetilde H_\psi(\nu)| \geqslant M_0~ \text{for all}~ 0\leqslant\nu \leqslant h_0.$$ 
  If $\nu_k<0$ then $|\nu -\nu_k|\geqslant |\nu_{k}|$. Therefore, without loss of generality, we can assume that 
  $$0 \leqslant \nu_{1}<\nu_{2}<\cdots<\nu_N<h_0$$ 
  and thus 
  $$|H_\psi(\nu)|\geqslant M_0 \prod_{k=1}^{N}\left|\nu-\nu_k\right|^{p_k} 
  ~\text{where~} p_k\in\mathbb{N}, k=1,\ldots,N.$$
   Put 
   $$L=\min _{1 \leqslant k \leqslant N-1}\left(\nu_{k+1}-\nu_{k}\right), P_N=\sum_{k=1}^{N} p_{k}.$$
    For $1 \leqslant k \leqslant N$, let $\delta_k=\frac{\rho^\frac{1}{2p_k}} {M_0^{\frac{1}{2p_k}} L^{\frac{P_N-2 p_k}{2 p_k}}}$, we have threre cases as follows:\\
  \textbf{Case 1.} For $k=1,...,N-1$, if $\nu_k+\delta_{k} \leqslant \nu \leqslant \nu_{k+1}-\delta_{k+1}$, we have $$|H_\psi(\nu)| \geqslant M_0 \prod_{k=1}^{N}\left|\nu-\nu_{k}\right|^{p_k} \geqslant M_0 \delta_{k}^{p_k} \delta_{k+1}^{p_{k+1}} L^{\mathbb{P}_k}=\rho.$$
   Here, $\mathbb{P}_{k}=P_N-p_k-p_{k+1}$.\\
  \textbf{Case 2.} $\nu_N+\delta_N<\nu$, we have $$|H_\psi(\nu)|  \geqslant M_0 L^{P_N-p_N} \delta_N^{p_N}$$
   and by the choice $\delta_N=\left(\frac{\rho L^{2p_N}}{L^{P_N} M_0}\right)^{\frac{1}{2 p_N}}$ with $\rho \leqslant M_0L^{P_N}$, we get $|H_\psi(\nu)|  \geqslant \rho$.\\
  \textbf{Case 3.} If $0 \leqslant \nu<\nu_{1}-\delta_{1}$, similarly, 
  $$|H_\psi(\nu)|  \geqslant M_0 L^{P_N-p_1} \delta_1^{p_1}$$ and for $\delta_{1}=\left(\frac{\rho L^{2p_N}}{L^{P_N} M_0}\right)^{\frac{1}{2 p_1}}$, we obtain that $|H_\psi(\nu)|  \geqslant \rho$. 
  
  Therefore, $$B_\psi(\rho) \subset \bigcup_{k=1}^{N}V_k~~
   \text{where} ~V_k=(\nu_k-\delta_k,\nu_k+\delta_k).$$ Next, as shown in \cite{jorgensen2014}, the volume of a ball of radius $\rho$ is given by $V_{d}(\rho)=\frac{\pi^{\frac{d}{2}}}{\Gamma\!\left(\tfrac{d}{2}+1\right)}\,\rho^d,$ where $\Gamma$ denotes the Gamma function. So, we have 
  \begin{eqnarray*}
      m_d[B_\psi(R)]&\leqslant&\sum_{k=1}^N \frac{\pi^{\frac{d}{2}}}{\Gamma\left(\frac{d}{2}+1\right)}[(\nu_k+\delta_k)^d-(\nu_k-\delta_k)^d]=\frac{2\pi^{\frac{d}{2}}}{\Gamma\left(\frac{d}{2}+1\right)}\sum_{k = 1}^N \delta _k \sum_{j=0}^{d-1}(\nu_k+\delta_k)^{d-1-j}(\nu_k-\delta_k)^j\\
      &\leqslant&\frac{2d\pi^{\frac{d}{2}}}{\Gamma\left(\frac{d}{2}+1\right)}\sum_{k = 1}^N \delta _k (\nu_k+\delta_k)^{d-1}\leqslant\frac{2d\pi^{\frac{d}{2}}}{\Gamma\left(\frac{d}{2}+1\right)}\sum_{k = 1}^N \delta _k \left(\max_{k=\overline{1,N}} \nu_k+\delta_k\right)^{d-1}\\
      &\leqslant&\frac{2d\pi^{\frac{d}{2}}}{\Gamma\left(\frac{d}{2}+1\right)}\sum_{k = 1}^N \delta _k (\nu_k+\delta_k)^{d-1}\leqslant\frac{2d\pi^{\frac{d}{2}}}{\Gamma\left(\frac{d}{2}+1\right)}\sum_{k = 1}^N \sum_{j=0}^{d-1}\left( \begin{array}{*{20}{c}}
          {d-1} \\ 
          {j} 
      \end{array}\right) \widetilde{\nu}^{d-1-j} \delta_k^{j+1}.
  \end{eqnarray*}
  Here, $\widetilde \nu =\max_{k=\overline{1,N}} \nu_k$. Choose 
  $$\rho_0=\min\left\{ M_0 L^{P_N}, \frac{1}{2}\right\}, \beta=\min _{k=\overline{1,N}, j=\overline{0,d-1}}\frac{j+1}{2 m_{s}},$$
   then for $\rho \in (0,\rho_0)$, we obtain $\mu[B_\psi(\rho)]\leqslant C_0\rho^{\beta}$ where $$C_0= \frac{2d\pi^{\frac{d}{2}}}{\Gamma\left(\frac{d}{2}+1\right)}\sum_{k = 1}^N \sum_{j=0}^{d-1}\left( \begin{array}{*{20}{c}}
      {d-1} \\ 
      {j} 
  \end{array}\right) \frac{\widetilde{\nu}^{d-1-j}} { \left(M_0^{\frac{1}{2p_k}} L^{\frac{P_N-2 p_k}{2 p_k}}\right)^{j+1}}.$$
   and the proof of this lemma is finished.
  
 \end{proof}   
    
	Using the lemmas above, we obtain the result on the convergence rate of truncated regularization. For the convenience of the readers, we explicitly state the assumptions that were made in the previous results.
	
	\begin{Th}
Let $d\geqslant 1$, $\sigma>0$, $\varepsilon>0$, $s\geqslant 0$, and let 
$\psi\in L^1(0,T)$, $h\in L^2(\mathbb{R}^d)$ with $\psi\not\equiv 0$ in $L^1(0,T)$.
Let $\beta>0$ be as in Lemma~\ref{l1}\,(c). Assume that:
\begin{itemize}
  \item[(a)] There exist $\widetilde\psi\in L^1(0,T)$ and 
  $\widetilde h\in L^2(\mathbb{R}^d)$ depending on $\varepsilon$ such that
  \[
  \|\psi-\widetilde\psi\|_{L^1(0,T)} + \|h-\widetilde h\|_{L^2(\mathbb{R}^d)} \leqslant \varepsilon
  \quad \text{for all sufficiently small } \varepsilon>0.
  \]
  \item[(b)] There exist $p\in(1,2)$ and $\gamma>0$ such that
  $f \in L^p(\mathbb{R}^d)\cap H^{\gamma}(\mathbb{R}^d)$; denote by $q=\frac{p}{p-1}$ the conjugate exponent of $p$.
  \item[(c)] There exist $P>0$, $\theta<2$, and $p_T\neq 0$ such that
  \[
  \lim_{\tau\to 0^+}\tau^\theta \psi(T-\tau)=p_T, 
  \qquad 
  \sup_{0<\tau<T}\big|\tau^\theta \psi(T-\tau)\big|\leqslant P.
  \]
\end{itemize}
Then, for any $s\in[0,\gamma]$, there exists a constant $C>0$ independent of $\varepsilon$ such that
\[
\|f - f_{\alpha(\varepsilon),\widetilde\psi,\widetilde h}\|_{H^s(\mathbb{R}^d)}^2
\leqslant C\, \varepsilon^{\frac{2b}{\,1+\frac{s}{2-\theta}+b\,}},
\]
where
\[ \alpha:=\alpha(\varepsilon)=\varepsilon^{\frac{1}{1+\frac{s}{2-\theta}+b}}\]
and
\[
b=\min\left\{ \frac{\beta(q-2)}{2q}, \ \frac{\gamma-s}{2\sigma(2-\theta)}\right\}.
\]
\end{Th}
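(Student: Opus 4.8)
The plan is to keep the three–term splitting \eqref{insert} from the previous convergence theorem, namely $\|f-f_{\alpha,\widetilde\psi,\widetilde h}\|_{H^s(\mathbb{R}^d)}^2\le 2(I_1+I_2+I_3)$, and to sharpen only the ``bias'' term $I_2=\int_{\pi_{\widetilde\psi}(\alpha)}(1+|\mathbf z|^2)^s|\widehat f|^2\,d\mathbf z$, since $I_1$ and $I_3$ already come with explicit powers of $\alpha$ and $\varepsilon$. For $I_1$ I would reuse \eqref{I1-estimate} to get $I_1\le \frac{T^2\varepsilon^2}{\alpha^2}\|f\|_{H^s(\mathbb{R}^d)}^2$, and for $I_3$ the bound \eqref{I3-general}, $I_3\le C\varepsilon^2/\alpha^{2(1+s/a)}$, where by the remark after Lemma~\ref{H-lemma} we may take $a=2\sigma(2-\theta)$ in \eqref{H-upper-bound}. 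The whole difficulty is therefore concentrated in turning $I_2$, which the previous theorem only showed to be $o(1)$, into a genuine positive power of $\alpha$.

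For $I_2$ I would first use the inclusion $\pi_{\widetilde\psi}(\alpha)\subset\pi_\psi(\alpha+\varepsilon T)$ established in Step~2 of the previous theorem, set $\rho:=\alpha+\varepsilon T$, and invoke Lemma~\ref{l1} to split $\pi_\psi(\rho)=B_\psi(\rho)\cup C_\psi(\rho)$. On the high–frequency piece $C_\psi(\rho)$, part~(a) gives $|\mathbf z|\ge C_1\rho^{-1/(2\sigma(2-\theta))}$, so writing
\[
(1+|\mathbf z|^2)^s|\widehat f(\mathbf z)|^2=(1+|\mathbf z|^2)^{-(\gamma-s)}\,(1+|\mathbf z|^2)^{\gamma}|\widehat f(\mathbf z)|^2
\]
and bounding the first factor by a power of $\rho$ yields a contribution $\lesssim \rho^{(\gamma-s)/(\sigma(2-\theta))}\,\|f\|_{H^\gamma(\mathbb{R}^d)}^2$. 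On the low–frequency piece $B_\psi(\rho)$, where $|\mathbf z|^{2\sigma}\le h_0$ forces $(1+|\mathbf z|^2)^s$ to be bounded, I would combine the Hausdorff--Young inequality (from $f\in L^p$ with $1<p<2$ one gets $\widehat f\in L^q$, $q=p/(p-1)$) with Hölder's inequality and the measure estimate $m_d(B_\psi(\rho))\le C_0\rho^\beta$ of Lemma~\ref{l1}(b):
\[
\int_{B_\psi(\rho)}|\widehat f|^2\,d\mathbf z\le \|\widehat f\|_{L^q(\mathbb{R}^d)}^2\,\big(m_d(B_\psi(\rho))\big)^{1-2/q}\lesssim \rho^{\beta(q-2)/q}.
\]
Adding the two pieces gives $I_2\lesssim \rho^{2b}$ with $b$ exactly the minimum in the statement; and since the eventual choice of $\alpha$ forces $\varepsilon=o(\alpha)$, one has $\rho=\alpha+\varepsilon T\asymp\alpha$, whence $I_2\lesssim\alpha^{2b}$.

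The final step is to balance the decreasing contribution $I_2\lesssim\alpha^{2b}$ against the increasing contribution $I_3\lesssim\varepsilon^2/\alpha^{2(1+s/a)}$. Equating the two orders fixes $\alpha=\alpha(\varepsilon)$ as the power of $\varepsilon$ displayed in the statement and makes both terms equal to the asserted rate, while $I_1\lesssim\varepsilon^2/\alpha^2$ is, thanks to $s\ge 0$ and $b>0$, no larger in order than the right–hand side. Substituting back into \eqref{insert} then gives the claimed bound with a constant $C$ independent of $\varepsilon$.

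The step I expect to be the main obstacle is the $B_\psi(\rho)$ estimate. Near the finitely many real zeros of $H_\psi$ the integrand cannot be controlled by the $H^\gamma$–smoothness of $f$, since those zeros lie in the bounded low–frequency region where no decay in $|\mathbf z|$ is available; the only usable leverage is to trade the \emph{extra integrability} $f\in L^p$ against the \emph{smallness of the measure} of the near–zero set, which is precisely what Lemma~\ref{l1}(b) supplies. Getting the two exponents in $b$ to appear on an equal footing — the interpolation power $\beta(q-2)/q$ from the low–frequency set and the smoothness power $(\gamma-s)/(\sigma(2-\theta))$ from the tail — is what makes the bound on $I_2$ tight, and hence what determines the final convergence rate.
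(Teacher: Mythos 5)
Your proposal is essentially the paper's own proof: the same three-term split \eqref{insert}, with $I_1$ bounded as in \eqref{I1-estimate}, $I_2$ handled through the inclusion $\pi_{\widetilde\psi}(\alpha)\subset\pi_\psi(\alpha+\varepsilon T)$ and the $B_\psi/C_\psi$ decomposition of Lemma~\ref{l1} (Hausdorff--Young plus H\"older against the measure bound $m_d(B_\psi(\rho))\leqslant C_0\rho^\beta$ on the low frequencies, the $(1+|\mathbf{z}|^2)^{s-\gamma}$ trade on the high-frequency tail), $I_3$ via \eqref{I3-general}, and the same balancing of $\alpha^{2b}$ against $\varepsilon^2/\alpha^{2(1+s/a)}$. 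Your variant of first passing to $\pi_\psi(\alpha+\varepsilon T)$ and then splitting is equivalent to the paper's inclusions $B_{\widetilde\psi}(\alpha)\subset B_\psi(\alpha+\varepsilon T)$ and $C_{\widetilde\psi}(\alpha)\subset C_\psi(\alpha+\varepsilon T)$, and your observations that $\alpha+\varepsilon T\asymp\alpha$ and that $I_1$ is dominated by the $I_3$ order are exactly the paper's implicit steps.

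One bookkeeping point does not match the statement: with your choice $a=2\sigma(2-\theta)$ in \eqref{H-upper-bound}, the balance of $I_2$ against $I_3$ yields $\alpha=\varepsilon^{1/(1+\frac{s}{2\sigma(2-\theta)}+b)}$, which coincides with the displayed $\alpha=\varepsilon^{1/(1+\frac{s}{2-\theta}+b)}$ only when $s=0$ or $2\sigma=1$; so your assertion that equating the two orders ``fixes $\alpha$ as the power displayed in the statement'' is not literally correct. The paper's proof instead inserts $a=2-\theta$ into \eqref{I3-general}; since \eqref{H-upper-bound} holds for any $a\leqslant 2\sigma(2-\theta)$ once $|\mathbf{z}|\geqslant\max\{z_1,1\}$, that choice is legitimate when $2\sigma\geqslant 1$, and it is what produces the exponent $1+\frac{s}{2-\theta}+b$ in the theorem. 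This discrepancy is already latent in the paper itself (the remark after Lemma~\ref{H-lemma} recommends $a=2\sigma(2-\theta)$ while the final proof uses $a=2-\theta$), so it is a matter of matching exponents to the stated result rather than a flaw in your argument; your version in fact gives a valid, and for $2\sigma>1$ sharper, rate than the one claimed.
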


	\begin{proof}
    From \eqref{insert}, we have 
$$  \|f-f_{\varepsilon,\widetilde\psi,\widetilde h}\|_{H^s(\mathbb{R}^d)}^2=2(I_1+I_2+I_3).   $$
As shown in \eqref{I1-estimate}, we have
\begin{equation*}
    I_1\leqslant \frac{T^2\varepsilon^2}{\alpha^2}\|f\|_{H^s(\mathbb{R}^d)}^2.
\end{equation*}    
We estimate $I_2$. From Lemma \ref{l1}, we have
{ \[I_2=I_{in}+I_{out}\]}
where
\begin{align*}
  I_{in}&=\int_{B_{\widetilde\psi}(\alpha)}
  (1+|\mathbf{z}|^2)^{s}\left|\widehat f(\mathbf{z})\right|^2d\mathbf{z},\\
  I_{out}&=\int_{C_{\widetilde\psi}(\alpha)}
  (1+|\mathbf{z}|^2)^{s}\left|\widehat f(\mathbf{z})\right|^2d\mathbf{z}.
\end{align*}
Since $f\in L^p(\mathbb{R}^d)$ for $1<p< 2$, we obtain in view of Hausdorff-Young's theorem that $\widehat f\in L^q(\mathbb{R}^d)$ with $q> 2$, $\frac{1}{p}+\frac{1}{q}=1$ and $\|\widehat f\|_{L^q(\mathbb{R}^d)}\leqslant \|f\|_{L^p(\mathbb{R}^d)}$.
Using this inequality together with H\"older's inequality, we obtain
{\begin{align*}
   I_{in} &\leqslant (1+h_0^{1/\sigma})^s(m_d(B_{\widetilde \psi}(\alpha)))^{(q-2)/q}\left(\int_{B_{\widetilde \psi}(\alpha)}|\widehat f(\mathbf{z})|^q
   d\mathbf{z}\right)^{2/q}\\
   &\leqslant (1+h_0^{1/\sigma})^s(m_d(B_{\widetilde \psi}(\alpha)))^{(q-2)/q}\|f\|_{L^p(\mathbb{R}^d)}^2.
\end{align*}}  
{Next, in order to handle the measure of the set $B_{\widetilde\psi}(\alpha)$, we note that}
\begin{equation}\label{B-set}
 B_{\widetilde\psi}(\alpha)\subset B_{\psi}(\alpha+\varepsilon T).   
\end{equation}
 From Lemma \ref{l1} we obtain
 \[m_d(B_{\widetilde\psi}(\alpha))\leqslant m_d(B_{\psi}(\alpha+\varepsilon T))\leqslant C(\alpha+\varepsilon T)^\beta.\]
It follows that
\begin{equation}\label{Iin-estimate}
   I_{in}\leqslant C(\alpha+\varepsilon T)^{\frac{\beta(q-2)}{q}}\|f\|_{L^p(\mathbb{R}^d)}^2. 
\end{equation}
To compute $I_{out}$, we consider the set $C_{\widetilde\psi}(\alpha)$. Let $\mathbf{z}\in C_{\widetilde\psi}(\alpha)$ and $|\mathbf{z}|>h_0$. Using Lemma \ref{l1} and proceeding as in \eqref{B-set}, we obtain
$$ C_{\widetilde\psi}(\alpha)\subset C_\psi(\alpha+\varepsilon T)
\subset \{\mathbf{z}\in\mathbb{R}^d: |\mathbf{z}|> C_{1} (\alpha+\varepsilon T)^{-\frac{1}{2\sigma(2-\theta)}}\}.  $$
Hence, 
\begin{align}
   I_{out} &\leqslant \int_{\{|\mathbf{z}|> C_{1} (\alpha+\varepsilon T)^{-\frac{1}{2\sigma(2-\theta)}} \}} (1+|\mathbf{z}|^2)^{s-\gamma}
   (1+|\mathbf{z}|^2)^{\gamma}|\widehat f(\mathbf{z})|^2
   d\mathbf{z}\nonumber\\
   &\leqslant C(\alpha+\varepsilon T)^{\frac{\gamma-s}{\sigma(2-\theta)}} \|f\|_{H^\gamma(\mathbb{R}^d)}^2  \label{Iout-estimate}.
\end{align}
Finally, we estimate $I_3$. Using \eqref{I3-general} with 
$a=2-\theta$ yields
\begin{align}
    \|f_{\alpha, \widetilde \psi, h}-f_{\alpha, \widetilde \psi,\widetilde h}\|_{H^s(\mathbb{R}^d)}^2&\leqslant C\frac{\varepsilon^2}{\alpha^{2(1+s/(2-\theta))}}.
    \label{I3-estimate}
\end{align} 
 Combining \eqref{I1-estimate}, \eqref{Iin-estimate},
 \eqref{Iout-estimate} and \eqref{I3-estimate} we obtain
 \begin{align*}
    \|f-f_{\varepsilon,\widetilde\psi,\widetilde h}\|_{H^s(\mathbb{R}^d)}^2
    &\leqslant 2\frac{T^2\varepsilon^2}{\alpha^2}\|f\|_{H^s(\mathbb{R}^d)}^2+
    C(\alpha+\varepsilon T)^{\frac{\beta(q-2)}{q}}\|f\|_{L^p(\mathbb{R}^d)}^2+
    C(\alpha+\varepsilon T)^{\frac{\gamma-s}{\sigma(2-\theta)}} \|f\|_{H^\gamma(\mathbb{R}^d)}^2 
    +C\frac{\varepsilon^2}{\alpha^{2(1+s/(2-\theta))}}\\
    & \leqslant C\frac{\varepsilon^2}{\alpha^{2(1+s/(2-\theta))}}\left(1+\|f\|_{H^\gamma(\mathbb{R}^d)}^2\right)+C(\alpha+\varepsilon T)^{2b}
    \left(\|f\|_{H^\gamma(\mathbb{R}^d)}^2+\|f\|_{L^p(\mathbb{R}^d)}^2\right)  
 \end{align*}
 where $b=\min\left\{ \frac{\beta(q-2)}{2q}, \frac{\gamma-s}{2\sigma(2-\theta)}\right\}$. \\
We choose $\alpha$ by balancing 
 	\[\frac{\varepsilon^2}{\alpha^{2(1+s/(2-\theta))}}
 =\alpha^{2b}\]
 which gives
  \[ \alpha=\varepsilon^{\frac{1}{1+\frac{s}{2-\theta}+b}}\]
 and 
  \[\|f-f_{\varepsilon,\widetilde\psi,\widetilde h}\|_{H^s(\mathbb{R}^d)}^2\leqslant C\varepsilon^{\frac{2b}{1+\frac{s}{2-\theta}+b}}.\]
 \end{proof}
 \section{A numerical example}
 Given that the theoretical results we have presented are grounded in the Fourier transform, we will therefore employ a discrete Fourier transform algorithm analogous to the one in paper \cite{doi:10.1137/0915067}. For $T=\sigma=1$ and $\psi(t)=1$, we consider
$$ u_{tt}(\mathbf{x}, t) - 2\Delta u_t(\mathbf{x}, t) + \Delta^2 u(\mathbf{x}, t) = \psi(t)f(\mathbf{x}), \quad 0 < t < 1, \quad \mathbf{x} \in \mathbb{R},$$
with the initial condition $u(x,0)=u_t(x,0)=0$ and the final condition
$$h(x) = \int_0^1 \dfrac{s}{\sqrt{4s+1}}e^{-\frac{x^2}{4s+1}}ds.$$
Let $A,B\in\mathbb{R}$ with $A<B$ and $h(x)\approx 0$ for $x\notin[A,B]$. Define the spatial domain and parameters by considering the interval $[A,B]$, partitioned into a vector of $L$ evenly spaced points $\mathbf{x}=[x_j]_{j=1}^L$ where $\mathbf{x}$ is a vector of length $L$, with each element $x_j=A+(j-1)\frac{B-A}{L-1}$. The noisy data is generated as follows
$$h_\epsilon(x_j) = h(x_j) + \epsilon\cdot Y_j,$$
where $Y_j\sim\mathcal{U}(-1;1)$ and $\epsilon$ is the strength of the noise. Figure \ref{fig:data} illustrates the noisy data generated from $h(x)$. The continuous curve in the plot shows the true data. The discrete points represent the corresponding data observed at the coordinates $x_j$, which include noise.
\begin{figure}[H]
    \centering
    \includegraphics[width=0.7\textwidth]{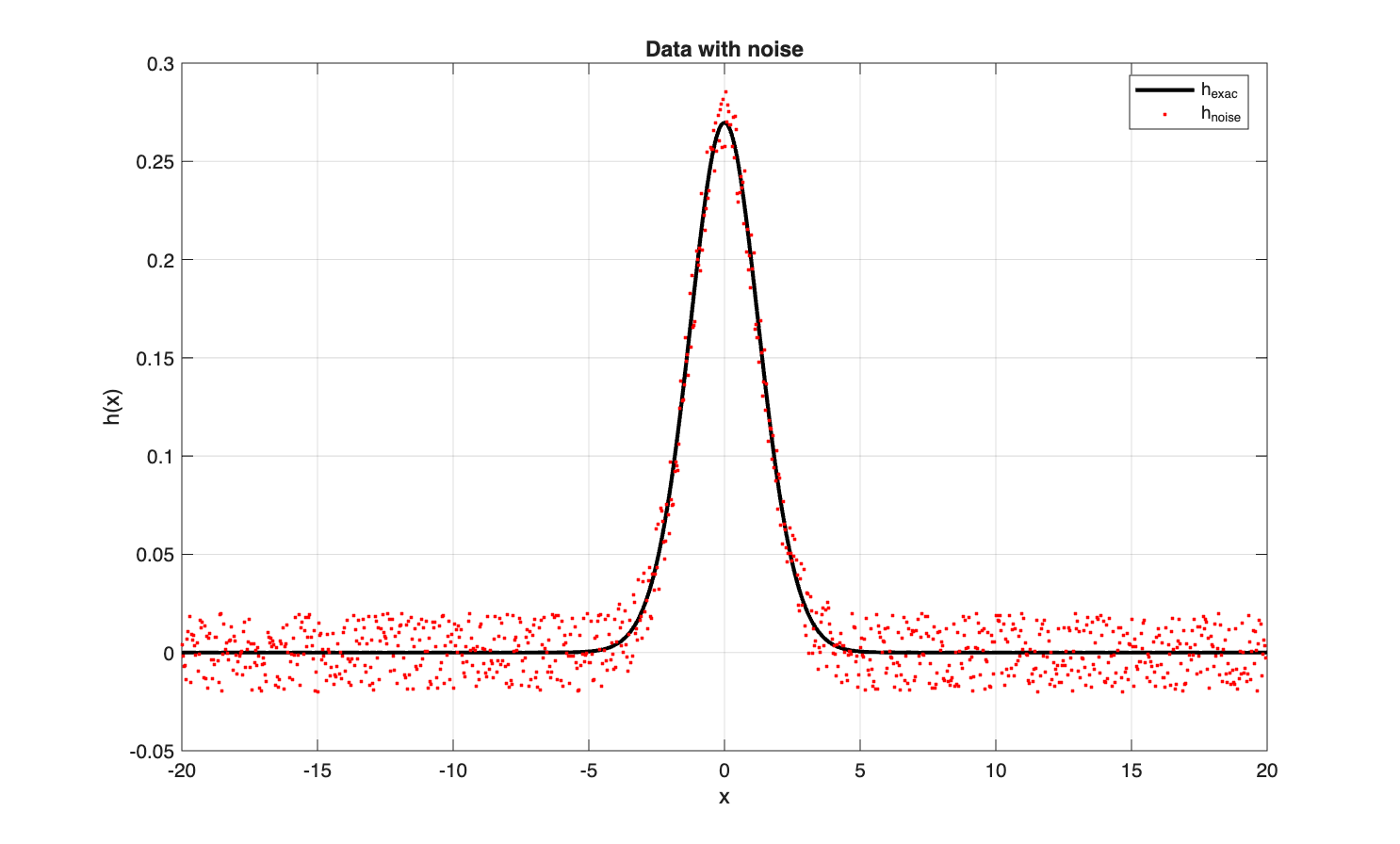}
    \caption{The noisy data generated from $h(x)$ with $\epsilon=10^{-2}$.}
    \label{fig:data}
\end{figure}
A comparison between the regularized and the exact solutions is presented in Figure \ref{fig:sol} under several noise conditions. The subplots, from left to right, correspond to error magnitudes of $\epsilon=10^{-1}$ to $10^{-3}$, respectively. In each plot, the exact solution is depicted by the bold continuous curve, whereas the regularized solution is represented by the dashed line.
\begin{figure}[htb!] 
    \centering 
    \begin{subfigure}[b]{0.5\textwidth}
        \centering
        \includegraphics[width=\linewidth]{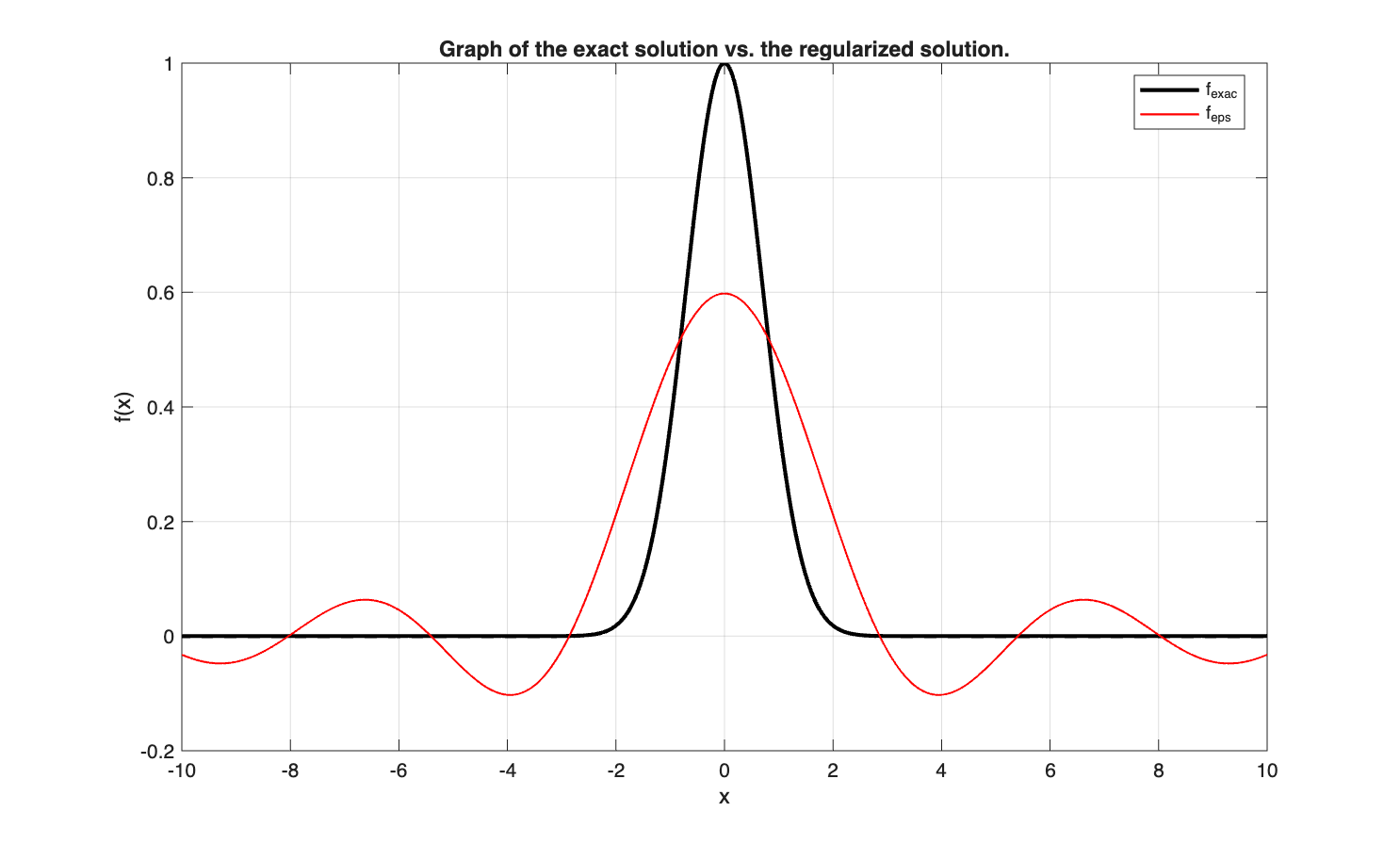}
        \caption{$\epsilon=10^{-1}$}
        \label{fig:sol1e-1}
    \end{subfigure}
    \begin{subfigure}[b]{0.5\textwidth}
        \centering
        \includegraphics[width=\linewidth]{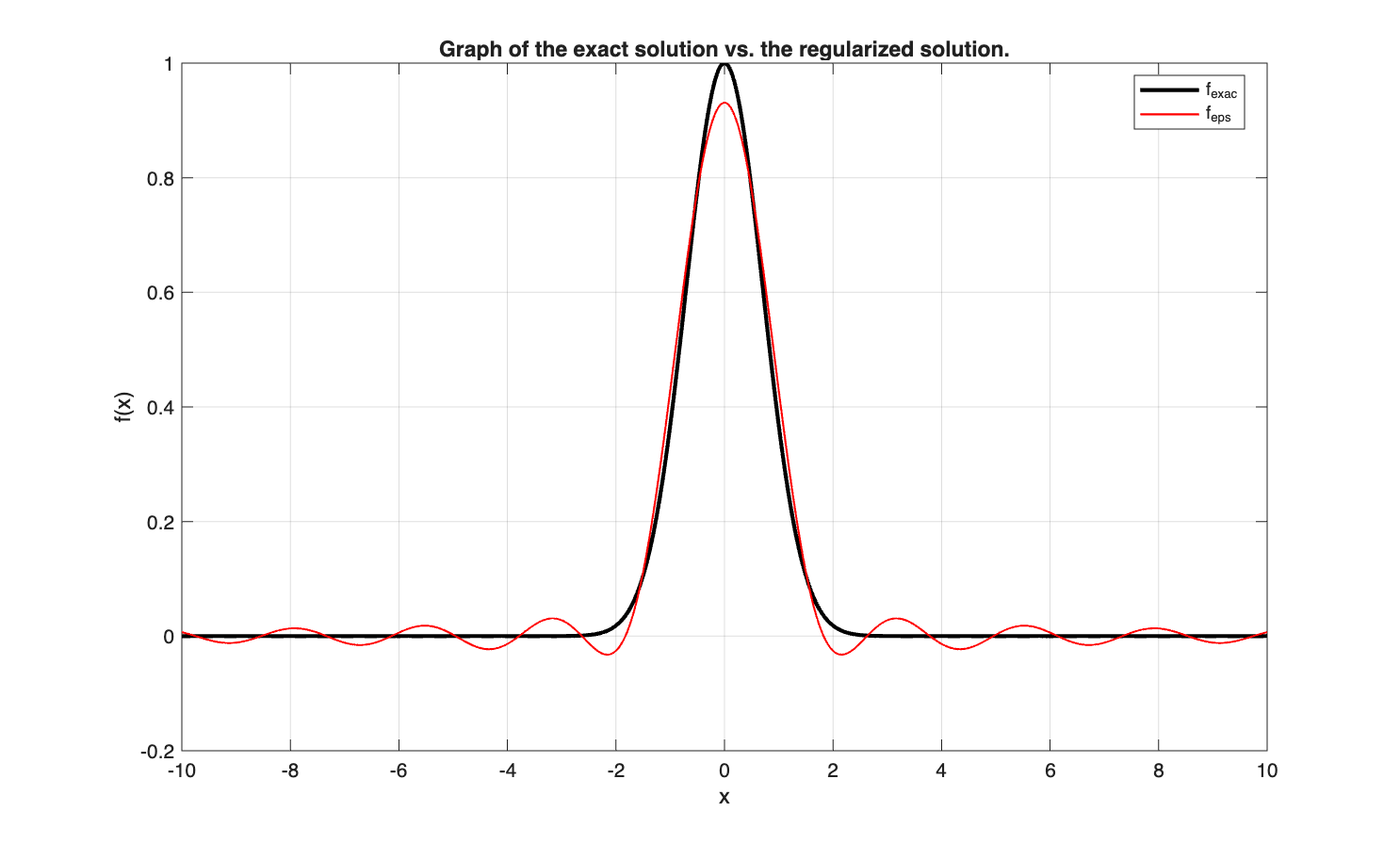}
        \caption{$\epsilon=10^{-2}$}
        \label{fig:sol1e-2}
    \end{subfigure}
    \begin{subfigure}[b]{0.5\textwidth}
        \centering
        \includegraphics[width=\linewidth]{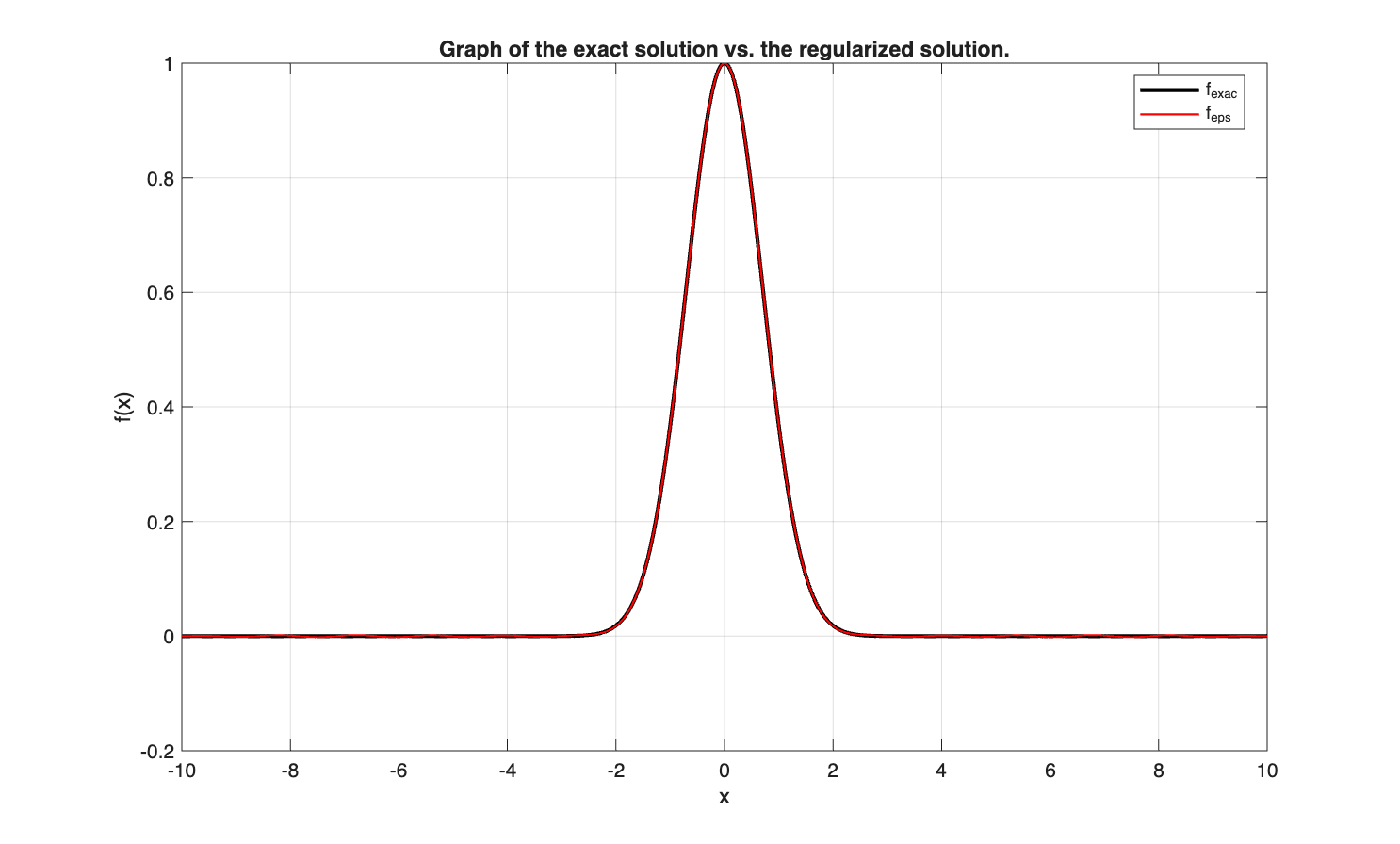}
        \caption{$\epsilon=10^{-3}$}
        \label{fig:sol1e-3}
    \end{subfigure}
    \caption{Graph of the exact solution vs. the regularized solution.}
    \label{fig:sol}
\end{figure}

Furthermore, to compute the $L^2(\mathbb{R})$ error, we use the following discrete approximation
$$
E_\text{abs} = \left\| f_\text{exact} - f_\epsilon\right\|_{L^2} \approx \sqrt{ dx \cdot \sum_{i=1}^{L} \left( f_\text{exact}((x_j)) - f_\epsilon(x_j) \right)^2 },
$$
where $dx=\frac{B-A}{L-1}$.  Moreover, to contextualize the approximation error with respect to the magnitude of the exact solution, we employ the notion of relative error, defined as
\[
E_{\text{rel}} = \frac{\left\| f_{\text{exact}} - f_\epsilon \right\|_{L^2}}{\left\| f_{\text{exact}} \right\|_{L^2}}
\approx
\frac{
    \sqrt{\,dx \cdot \sum_{i=1}^{L} \left( f_\text{exact}((x_j)) - f_\epsilon(x_j) \right)^2 }
}{
    \sqrt{\,dx \cdot \sum_{i=1}^n \left( f_\text{exact}((x_j)) \right)^2}
}.
\]
The error table below summarizes the absolute and relative errors between the exact and regularized solutions.
\begin{table}[H]
\centering
\begin{tabular}{|c|c|c|}
\hline
\textbf{epsilon} & \textbf{$E_{abs}$} & \textbf{$E_{rel}$} \\
\hline
$10^{-1}$ & 5.29$\times 10^{-1}$ & 47.26\% \\
$10^{-2}$ & 1.03$\times 10^{-1}$ & 9.22\%  \\
$10^{-3}$ & 5.07$\times 10^{-3}$ & 0.45\%  \\
$10^{-4}$ & 2.71$\times 10^{-3}$ & 0.24\%  \\
\hline
\end{tabular}
\end{table}

   	\section*{Acknowledgments} 
	We thank the reviewer for their suggestion to write a more detailed version, which led to a substantial improvement over the original manuscript. This research is support by Vietnam National University (VNU-HCM) under grant number T2024-18-01. The first author also completed the improved version of this paper during a sponsored visiting research period at Vietnam Institute for Advanced Studies in Mathematics (VIASM) from July 1 to August 30, 2025. 
	\bibliographystyle{refs}
	\bibliography{source_id_bib}
\end{document}